\numberwithin{equation}{section}
\newtheorem{theorem}{Theorem}[section]
\newtheorem{lemma}[theorem]{Lemma}
\newtheorem{proposition}[theorem]{Proposition}
\newtheorem{corollary}[theorem]{Corollary}
\newtheorem{example}{Example}[section]
\theoremstyle{definition}
\theoremstyle{notation}
\begin{document}

 \title[some classes of graphs and their products as frame graphs]
{Characterization of some classes of graphs and their products as frame graphs}

 %%% ----------------------------------------------------------------------

 %%% ----------------------------------------------------------------------

\author[F. Abdollahi]{F. Abdollahi} \author{H. Najafi }

 \address{Department of Mathematics, College of Sciences, Shiraz University, Shiraz 7187919556, Iran.}

\email{hnajafi@shirazu.ac.ir (H. Najafi) and
abdollahi@shirazu.ac.ir (F. Abdollahi).}

\subjclass[2010]{05C50; 42C15; 15A63}%

\keywords{Frame Graph, Graph product, Tree, Corona product, Inner product space}
%% keywords here, in the form: keyword \sep keyword

%% PACS codes here, in the form: \PACS code \sep code

 %% MSC codes here, in the form: \MSC code \sep code
%% or \MSC[2008] code \sep code (2000 is the default)

%%% ----------------------------------------------------------------------
\maketitle
%%% ----------------------------------------------------------------------

 %%% ----------------------------------------------------------------------

 %%% ----------------------------------------------------------------------

 %%% ----------------------------------------------------------------------
%{1}
%%% ----------------------------------------------------------------------
 \begin{abstract}
%% Text of abstract
To each finite frame $\varphi$ in an inner product space $\mathcal{H}$ we associate a simple graph $G(\varphi)$, called {\it  frame graph}, with the vectors of the frame as vertices and there is an edge between vertices $f$ and $g$ provided that $\langle f, g\rangle \neq 0$ \cite{AN}. 
In this paper the relation between the order of $G(\varphi)$ and the dimension of $\mathcal{H}$ is investigated for some well-known classes of graphs and their products.
\end{abstract}

 \section{Introduction}
The study of frames, using the properties of graphs, is an
exciting research topic and hopefully will become mutually
useful for both frame and graph theory as well as in computer science.
For example, in \cite{strohmer, Holmes, Bodman2005}
the relation between equiangular tight frames and
graphs was observed. A one-to-one correspondence
between a subclass of equiangular tight frames and
regular two-graphs was offered in \cite{Holmes}
and another one between real equiangular frames
of $n$ vectors and graphs of order $n$ was given in
\cite{Waldron}. The authors of \cite{Sustik} found
some restrictions on the existence of real equiangular
tight frames by an equivalence between equiangular tight
frames and strongly regular graphs with certain parameters. 

 In \cite{AN} we defined a natural connection between frames and graphs.
This connection is made by the zero-nonzero pattern of the
correlation between different elements of the frame.
More precisely, for a finite frame $\varphi$ in
an inner product space $\mathcal{H}$ we associate
a simple graph $G(\varphi)$, termed the {\it frame graph},
with the elements of a frame as vertices, two distinct
vertices are adjacent if and only if the respective
vectors are non-orthogonal. A simple graph $G$ is said
to be {\it frame graph in space $\mathcal{H}$}
if there exists a frame $\varphi$ for $\mathcal{H}$ such that $G(\varphi)=G$.
In \cite{AN} we studied some basic properties of frame graphs and identified some classes of tight frame graphs (a frame graph is a {\it tight frame graph} if the associated frame is tight). Among other things it is shown that complete graphs and the join of each graph with itself are tight frame graphs, whereas non-trivial trees and cycles with at least than six vertices are not. 

This interesting definition and the corresponding observations have received an extensive interest by many researchers, see \cite{Wal, Chi, ChSh, FG, IM} for example.  
Investigating the relation between the dimension of
$\mathcal{H}$ and the graph-theoretic properties of $G$
is the main purpose of this paper. This problem has a deep connection
with a well known problem in graph theory,
namely \textit{minimum positive semidefinite rank}.

The outline of the paper is as follows.
In Section $2$, after fixing some notation
and definitions, we discuss the relation between
the dimension of $\mathcal{H}$ and the
\textit{minimum positive semidefinite rank problem}
which considers the minimum rank over all positive semidefinite
Hermitian matrices whose $ij$th entry (for $i\neq j$)
is nonzero whenever $\{i, j\}$ is an edge in $G$ and is zero
otherwise. By this relation
some well known classes of graphs such
as trees and their complements,
cycles and their complements, complete
and complete bipartite graphs will be
characterized as frame graphs. In Section
$3$, the relation between $dim(\mathcal{H})$
and the order of a graph will be studied for
the join of two graphs and also for the corona,
Cartesian and strong product of some well-known
classes of graphs. As a result we find the minimum
positive semidefinite rank of some classes of
graphs which, as far as we know, are new results.
The results are summarized in Table \ref{table}.

\begin{table}[h]
\centering{
\begin{tabular}{rcccc}
Result \# & $G$ & Order & $mr_+^{\mathbb{C}}(G)$ & $mr_+^{\mathbb{R}}(G)$ \\ \toprule
\cite{AIM}, \ref{T33} & $P_n \boxtimes P_m$ & $nm$ & $(n-1)(m-1)$ & $(n-1)(m-1)$ \\
\ref{T37} & $T \Box K_n$ & $nm$ & $mn-n$ & $mn-n$ \\
\ref{C38} & $C_3 \Box P_n$ & $3n$ &$ 3n-3$ & $3n-3$ \\
\ref{T310} & $K_n \Box K_m$ &$nm$ & $\leq n+m-1$ & $n+m-2$ or $n+m-1$ \\
\ref{c313} & $T \circ T'$& $mm'+m$ &$mm'-1$ & $mm'-1$\\
\ref{c313} & $T \circ K_n, n \geq 2 $ & $mn+m$ & $2m-1$ & $2m-1$\\
\ref{c313} & $K_n \circ T$ & $mn+n$ & $nm-n+1$ & $nm-n+1$ \\
\cite{AIM}, \ref{c313} & $K_n \circ K_m$ & $mn+n$ &$n+1$& $n+1$ \\
\ref{c313} & $C_n \circ T$ & $mn+n$ &$nm-2$& $nm-2$ \\
\ref{c313} & $T \circ C_n$ & $mn+m$ &$m(n-1)-1$& $m(n-1)-1$ \\
\cite{peters}, \ref{c313} & $C_n \circ K_m$ & $mn+n$ &$2n-2$& $2n-2$\\
\ref{c313} & $K_m \circ C_n$ & $mn+m$ &$m(n-2)+1$& $m(n-2)+1$ \\
\ref{c313} & $C_n \circ C_m$ & $mn+n$ &$n(m-1)-2$& $n(m-1)-2$ \\ \bottomrule 
\end{tabular} \vspace{1mm}
}
\caption{Summary of minimum positive semidefinite rank results established in this paper ($T$ and $T'$ are trees with $|T|=m$ and $|T'|=m'$).} \label{table} 
\end{table}

\section{Definitions, Preliminaries and basic facts}\

The following definitions and facts are
standard and can be found in any text book about
graph theory, see for example \cite{Diestel,Bolobas, West}.

Recall that a {\it graph} $G=(V, E)$ consists
of a set of vertices $V$ and a
set of edges $E$, where the elements of $E$ are
two-element sets of vertices.
The {\it order} of a graph $G$, denoted by $|G|$, is
the number of vertices. Distinct vertices $f_i$ and $f_j$
is said to be {\it adjacent} if $\{f_i, f_j\}$ is an edge.
The {\it degree} of each vertex is the number of vertices adjacent to it and the minimum degree among all vertices is denoted by
$\delta(G)$. Note that
our graphs are simple, i.e., multiple edges
and loops do not appear in them and all the edges are undirected.
A {\it path} is a simple graph whose vertices can be ordered so
that two vertices are adjacent if and only if they are consecutive
in the list. A {\it cycle} is a graph with an equal number of
vertices and edges whose vertices can be placed around a circle so
that two vertices are adjacent if and only if they appear
consecutively along the circle. A cycle of order $n$ is denoted by $C_n$.

A graph $G$ is {\it connected} if each pair of vertices in $G$
belongs to a path in $G$. A {\it connected component} of $G$
is a maximal connected subgraph of $G$. A connected
graph without a cycle is called {\it a tree}.\\
An {\it independent} set in a graph is a set of pairwise
nonadjacent vertices. The {\it independence} number of a graph
$G$, $\alpha(G)$, is the maximal size of an independent set of
vertices. A graph $G$ is {\it complete} or a {\it clique} if $\alpha(G)=1$.
The complete graph of order $n$ is denoted by $K_n$.
The graph $G=(V, E)$ is called {\it bipartite} if $V$ is the union
of two disjoint (possibly empty) independent sets called partite
sets of $G$. If every vertex of the first set with $n$
elements is adjacent to every vertex of the second set
with $m$ elements, then it is called a {\it complete bipartite} graph
and denoted by $K_{n,m}$ \\ 
By the {\it complement} $\overline{G}$ of a graph
$G$, we mean the graph on the same vertex set where
two vertices are adjacent if and only if they are not
adjacent in $G$.\\
Given a graph $G=(V, E)$, the graph $H=(V', E')$ with
$V' \subseteq V$ and $E' \subseteq E$ is called
a {\it subgraph} of $G$. An {\it induced} subgraph $H$ is
a subgraph in which two vertices are adjacent if and
only if they are adjacent in $G$. 

 Suppose that $G$ is labeled and that $G_1, . . . , G_k$
are (labeled) subgraphs of $G$, that is, each $G_i , i = 1, . . . , k$
is the result of deleting some edges and/or vertices from $G$.
We say that $G_1, . . . , G_k$ cover $G$ if each edge (vertex) of
G is an edge (vertex) of at least one $G_i, 1 \leq i \leq k$ or
$G=\bigcup_{i=1}^{k} G_i$.
The cover $G_1, . . . , G_k$ of $G$ is called a clique cover of $G$
if each of $G_1, . . . , G_k$ is a clique of $G$.
The {\it clique cover number} of G, $cc(G)$, is
the minimum value of $k$ for
which there is a clique cover $G_1, . . . , G_k$ of $G$.

 A graph is said to be {\it chordal} if it has no induced cycles $C_n$
with $n \geq 4$. As examples trees and complete graphs are chordal.

 For an $n\times n$ Hermitian matrix $A$, the graph of $A$, denoted
by $\mathcal{G}(A)$, is the graph with vertices $\{1, 2, \ldots, n\}$
such that two distinct vertices $i$ and $j$ are adjacent if and only if
the $i j$-th entry of $A$ is non-zero.
The set of all $n\times n$ complex Hermitian positive semidefinite matrices will be
denoted by $\mathcal{H}_{n}^{+}$ and its subset consisting of all real matrices
denoted by $S_{n}^{+}$.

 The {\it real minimum positive semidefinite rank} of $G$, ${\it mr}_{+}^{\mathbb{R}}(G)$,
and {\it complex Hermitian minimum positive semidefinite rank}, ${\it mr}_{+}^{\mathbb{C}}(G)$ are defined by
$$\textrm{mr}_{+}^{\mathbb{R}}(G)=\min\{rank(B): B \in S_{n}^{+} \ and \ \mathcal{G}(B)=G \},$$ 
and
$$\textrm{mr}_+^{\mathbb{C}}(G)=\min\{rank(B): B \in \mathcal{H}_{n}^{+} \ and \ \mathcal{G}(B)=G \}.$$
If $mr_{+}^{\mathbb{R}}(G)=mr_{+}^{\mathbb{C}}(G)$ , then
we denote the common value $mr_{+}^{\mathbb{R}}(G)=mr_{+}^{\mathbb{C}}(G)$
by $mr_+(G)$.

 Clearly ${\it mr}_{+}^{\mathbb{C}}(G) \leq {\it mr}_{+}^{\mathbb{R}}(G)$.
A graph on $16$ vertices for which these parameters are
not identical was presented in \cite{Barioli}. For a graph $G$ of order $n$,
it is well known and straightforward that ${\it mr}_{+}^{\mathbb{R}}(G) \leq n-1$.
Other fundamental facts, that will be of interest to us, are that
${\it mr}_{+}(G)= n-1$ if and only if $G$ is a tree
and ${\it mr}_{+}(C_n)=n-2$\cite{Holst}.
For more details on minimum positive semidefinite rank
we refer the reader to \cite{Holst, Matthew, Fallat, AIM, Hackney, Barioli}.

 A {\it finite frame} for a finite dimensional
Hilbert space $\mathcal{H}$ (or inner product space)
is a finite sequence $\{f_i\}_{i=1}^{n}$ in $\mathcal{H}$ such that
there exist constants $0<A\leq B< \infty $ with the property that
$$A\parallel f\parallel^ 2 \leq \sum_{i=1}^{n}|\langle f,f_i\rangle|^2\leq
B\parallel f\parallel^2$$
holds for all $f \in \mathcal{H}$ \cite{Chris}.

Given a frame $\varphi=\{f_i\}_{i=1}^{m}$ in $\mathbb{R}^{n}$ or $\mathbb{C}^{n}$.
Let $B$
be the $n \times m$ matrix whose $j$th column is $f_j$. Then $B^{T}B$ ( $B^{*}B$)
is a positive semidefinite martix
called the {\it Gramian matrix} of $\varphi$ and
denoted by $Gram(\varphi)$($B^{*}$
is the conjugate transpose of B). The rank of $B$,
$n$, is equal to the rank of $Gram(\varphi)$ \cite[Theorem 7.2.10]{Horn}.

 Let $\varphi$ be a finite frame for the inner product space $\mathcal{H}$.
We associate a simple graph $G({\varphi})$, called a {\it frame graph},
whose vertices are the elements of $\varphi$ and two distinct
vertices $a$ and $b$ are adjacent if and only if $\langle
a,b\rangle \neq0 $. A simple graph $G$ is called
a {\it frame graph} in the space $\mathcal{H}$
if there exists a frame $\varphi$
for $\mathcal{H}$ such that $G(\varphi)=G$.
In this paper all graphs are non-trivial and connected, and so the associated frames do not include zero vectors.

For a given graph $G$, we interested to find the inner product $\mathcal{H}$ so that
there exists a frame $\varphi$ for $\mathcal{H}$ such that
$G({\varphi})= G$. At first, we show that this problem is
deeply connected with the well known \textit{minimum positive semidefinite rank problem}.
We will find the minimum positive semidefinite rank for join of graphs and Cartesian, corona and strong products of some
well known classes of graphs and use them to characterize these products as frame
graphs.

\begin{lemma} \label{L21}
For a graph $G$, $mr_{+}^{\mathbb{R}}(G)=k$ (respectively, $mr_{+}^{\mathbb{C}}(G)=k$)
if and only if $k$ is the minimum number such that $G$ is a frame graph in a real (respectively, complex)
inner product space of dimension $k$. 
\end{lemma}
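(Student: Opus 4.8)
The strategy is to exhibit a dictionary between frames and positive semidefinite matrices via the Gramian, and then match the two minimization problems term by term. First I would recall that if $\varphi = \{f_i\}_{i=1}^n$ is a frame for a $k$-dimensional real (resp. complex) inner product space $\mathcal{H}$, then $\operatorname{Gram}(\varphi)$ is an $n\times n$ matrix in $S_n^+$ (resp. $\mathcal{H}_n^+$) whose rank equals $\dim\mathcal{H}=k$, by the fact cited after the definition of the Gramian. Moreover the $ij$-entry of $\operatorname{Gram}(\varphi)$ is $\langle f_j, f_i\rangle$, which is nonzero exactly when $f_i$ and $f_j$ are non-orthogonal, i.e. exactly when $\{i,j\}$ is an edge of $G(\varphi)$; hence $\mathcal{G}(\operatorname{Gram}(\varphi)) = G(\varphi)$. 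So if $G$ is a frame graph in a $k$-dimensional space, there is a matrix $B\in S_n^+$ (resp. $\mathcal{H}_n^+$) with $\mathcal{G}(B)=G$ and $\operatorname{rank}(B)=k$, giving $mr_+^{\mathbb{R}}(G)\le k$ (resp. $mr_+^{\mathbb{C}}(G)\le k$).

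For the reverse direction I would start from a matrix $B\in S_n^+$ (resp. $\mathcal{H}_n^+$) with $\mathcal{G}(B)=G$ and $\operatorname{rank}(B)=k$. Since $B$ is positive semidefinite of rank $k$, it factors as $B = C^T C$ (resp. $C^* C$) for some $k\times n$ matrix $C$ of rank $k$; take $\varphi$ to be the sequence of columns of $C$, viewed as vectors in $\mathbb{R}^k$ (resp. $\mathbb{C}^k$). Because $C$ has rank $k$, its columns span $\mathbb{R}^k$ (resp. $\mathbb{C}^k$), so $\varphi$ is a spanning set in a finite-dimensional space and is therefore a frame. By construction $\operatorname{Gram}(\varphi)=B$, so the adjacency pattern of $G(\varphi)$ agrees with $\mathcal{G}(B)=G$; one must also note that no column is zero, since a zero column would be an isolated vertex, contradicting that $G$ is non-trivial and connected (as assumed in the paper). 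Hence $G$ is a frame graph in a $k$-dimensional space.

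Combining the two directions: the set of dimensions $k$ for which $G$ is a frame graph in a $k$-dimensional real (resp. complex) space coincides exactly with the set of ranks of matrices $B\in S_n^+$ (resp. $\mathcal{H}_n^+$) with $\mathcal{G}(B)=G$. Taking minima of these equal sets yields $mr_+^{\mathbb{R}}(G)=k$ (resp. $mr_+^{\mathbb{C}}(G)=k$) iff $k$ is the least dimension in which $G$ is a frame graph, which is the claim.

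The only genuinely delicate point — and the one I would be careful to spell out — is the handling of orthogonality versus the zero-nonzero pattern: the frame-graph definition only constrains \emph{off-diagonal} inner products, and the psd-rank definition only constrains off-diagonal entries of $B$, so the diagonal (the squared norms, always positive for nonzero vectors) plays no role and the correspondence $\mathcal{G}(\operatorname{Gram}(\varphi)) = G(\varphi)$ is exact. Everything else is a routine application of the rank–factorization fact for psd matrices and the elementary observation that a finite spanning sequence in a finite-dimensional inner product space is automatically a frame.
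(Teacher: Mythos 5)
Your proposal is correct and follows essentially the same route as the paper's own proof: both directions are handled via the Gramian/factorization correspondence between frames in a $k$-dimensional space and positive semidefinite matrices of rank $k$ with the prescribed zero--nonzero pattern. Your version is slightly more careful than the paper's (spelling out that a spanning sequence is a frame, that no column of the factor can be zero for a connected nontrivial graph, and that only off-diagonal entries matter), but the underlying argument is identical.
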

\begin{proof}
Let $A$ be a positive semidefinite real (complex) $n \times n$ matrix of rank $k$ such that
$\mathcal{G}(A)=G$. The matrix $A$ has a decomposition of the form
$A= B^{T}B$. The
columns of $B$ constitute a frame $\varphi$ in a real (complex)$k$-dimensional
space such that $G(\varphi)=G$. On the other hand,
the Gramian matrix of each frame $\varphi$ in a real (complex)
space $\mathcal{H}$ describes $G(\varphi)$, or $\mathcal{G}(Gram(\varphi))=G(\varphi)$,
and its rank is $dim(\mathcal{H})$. Hence the lowest
dimension of a real (complex) space in which a frame associated to the
graph $G$ is exactly the real (complex Hermitian) minimum positive semidefinite
rank.
\end{proof}

\begin{lemma} \label{L22}
Let $G$ be a frame graph in $\mathbb{R}^n$ (respectively, $\mathbb{C}^n )$ with $m>n$ vertices.
Then $G$ is a frame graph in $\mathbb{R}^{n+1}$ (respectively, $\mathbb{C}^{n+1} )$.
\end{lemma}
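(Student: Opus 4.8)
The plan is to start from a frame $\varphi=\{f_1,\dots,f_m\}$ for $\mathbb{R}^n$ (resp.\ $\mathbb{C}^n$) with $G(\varphi)=G$, to view $\mathbb{R}^n$ as the coordinate hyperplane $\mathbb{R}^n\times\{0\}\subseteq\mathbb{R}^{n+1}$, and to produce a frame $\psi=\{g_1,\dots,g_m\}$ for $\mathbb{R}^{n+1}$ with $G(\psi)=G$ by lifting \emph{exactly one} of the $f_i$ off the hyperplane. The obvious attempt $g_i=(f_i,0)$ fails only because these vectors all lie in a hyperplane and hence cannot span $\mathbb{R}^{n+1}$; giving a single vector a nonzero last coordinate will restore the span while leaving every off-diagonal Gramian entry untouched.

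The key observation, and the only place the hypothesis $m>n$ is used, is that some vector of $\varphi$ is redundant for spanning. Indeed, $\varphi$ spans $\mathbb{R}^n$, so a maximal linearly independent subfamily of $\varphi$ has exactly $n$ elements; since $m>n$, there is an index $v$ for which $\{f_i:i\neq v\}$ still spans $\mathbb{R}^n$ (resp.\ $\mathbb{C}^n$). I then define $g_v=(f_v,1)$ and $g_i=(f_i,0)$ for $i\neq v$, and set $\psi=\{g_i\}_{i=1}^{m}\subseteq\mathbb{R}^{n+1}$ (resp.\ $\mathbb{C}^{n+1}$).

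Two verifications remain, both routine. First, $G(\psi)=G$: for any two distinct indices $i\neq j$ at least one of the last coordinates of $g_i,g_j$ vanishes, hence $\langle g_i,g_j\rangle=\langle f_i,f_j\rangle$, so $\mathcal{G}(Gram(\psi))=\mathcal{G}(Gram(\varphi))=G$. Second, $\psi$ is a frame for $\mathbb{R}^{n+1}$: the vectors $g_i$ with $i\neq v$ span $\mathbb{R}^n\times\{0\}$, so in particular $(f_v,0)\in\span\psi$, whence $(0,1)=g_v-(f_v,0)\in\span\psi$ and therefore $\span\psi=\mathbb{R}^{n+1}$; a finite spanning family in a finite-dimensional inner product space is automatically a frame (equivalently, its Gramian now has full rank $n+1$). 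The complex case is word-for-word the same with $\mathbb{C}$ in place of $\mathbb{R}$.

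In short, I do not expect a real obstacle here: the content is the pigeonhole statement that when $m>n$ one vector of the frame may be deleted without destroying the span, and the remainder is the bookkeeping that appending a single unit coordinate to that distinguished vector preserves the zero--nonzero pattern of the Gramian while raising its rank by one. One could alternatively phrase the argument through Lemma~\ref{L21}, but the direct construction above is shorter and self-contained.
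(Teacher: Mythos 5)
Your construction is exactly the one in the paper: both proofs use $m>n$ to find a vector whose removal does not destroy the span, append a zero last coordinate to every vector except that one (which gets a $1$), and observe that the off-diagonal Gramian entries are unchanged while the new family spans $\mathbb{R}^{n+1}$ (resp.\ $\mathbb{C}^{n+1}$). Your write-up is correct and, if anything, spells out the two verifications more explicitly than the paper does.
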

\begin{proof}
Let $\varphi=\{f_1, f_2,\ldots,f_m\}$ be a frame
for $\mathbb{R}^n$ (respectively, $\mathbb{C}^n )$ such that $G(\varphi)=G$. Since $\varphi$ is a
frame, a subset of $\varphi$ with $n$ elements spans $\mathbb{R}^n$ (respectively, $\mathbb{C}^n$).
Then, without loss of generality, we may assume that $\{f_1, f_2,\ldots,f_n\}$
spans $\mathbb{R}^n$ (respectively, $\mathbb{C}^n$). Clearly
$$\varphi':= \{(f_1,0), (f_2,0),\ldots, (f_n,0), (f_{n+1},1), (f_{n+2},0),\ldots, (f_m,0)\},$$
spans $\mathbb{R}^{n+1}$ (respectively, $\mathbb{C}^{n+1} )$
and $G(\varphi)=G(\varphi')$.
Hence $G$ is a frame graph in $\mathbb{R}^{n+1}$ (respectively, $\mathbb{C}^{n+1} )$.
\end{proof}

 For a given graph $G$ on $n$ vertices,
the above lemmas conclude that
$mr_{+}^{\mathbb{R}}(G)=k$ (respectively, $mr_{+}^{\mathbb{C}}(G)=k$)
if and only if $G$ is just a frame graph in the real (respectively, complex) inner product spaces of dimension
$k, k+1,...,n$. For example, $G$ is just a frame graph in
spaces of dimension $n-1$ and $n$ if and only if $G$ is a tree. Another example
is a cycle of order $n$, $C_n$, which is just a frame graph in the spaces of dimension
$n-2, n-1$ and $n$.
Trees and cycles, as two well known classes of graphs,
are not frame graph in the spaces of lower dimensions, whereas complete
graphs are frame graphs in all possible spaces because $mr_{+}(K_n)=1$.
Clearly complete graphs are the only connected
graphs which are frame graphs in all possible spaces.
Let $G$ be a non-complete connected graph and
$\varphi$ be a frame for an inner product
space $\mathcal{H}$ such that $G(\varphi)=G$.
Then the dimension of $\mathcal{H}$ is at least two.
The following example introduces two
classes of connected graphs which achieve this bound.
\begin{example}\label{e23}
Let $G$ be a simple graph obtained from $K_n$ $(n \geq 3)$
by deleting an edge and let
$\{e_1, e_2\}$ be the standard orthogonal basis of $\mathbb{C}^{2}$
($\mathbb{R}^{2}$). Then
the frame $\varphi=\{e_1+e_2, e_1-e_2\} \bigcup \{e_1\}_{i=1}^{n-2}$
makes $G$ a frame graph in $\mathbb{C}^{2}$ ($\mathbb{R}^{2}$), i.e,
$mr_{+}(G)=2$.
Another graph is $H_n$, the ($n-2$)-regular graph on $n$ vertices. It is known
that $mr_{+}(H_n)=2$ (see Proposition
5.2 of \cite{Matthew}). Both $G$ and $H_n$ are frame
graphs in the inner product spaces
of dimension $2, 3,..., n-1$ and $n$. 
\end{example}

 Despite trees and complete graphs which are frame graphs in a minimum
and maximum number of spaces, respectively, complete bipartite graphs
with the parts of the same size are frame graphs in a medium number of possible spaces.
Indeed, since the independence number of each complete bipartite graph $K_{m,n} (m\geq n)$
is $m$ and it has no isolated vertex, it cannot be a frame graph in an inner product
space of dimension less than $m$. Let $\{e_i\}_{i=1}^{m}$ be
the standard orthogonal basis of $\mathbb{R}^{m}$.
The set $\{e_{i}\}_{i=1}^{m} \bigcup \{e_{1}+e_{2}+\ldots+
e_{i-1}+\frac{2-n}{2}e_{i}+e_{i+1}+\ldots+e_{m}\}_{i=1}^{n}$ $(n \neq 2)$ is a
frame for $\mathbb{R}^m$ and its frame graph is $K_{n,m}$.
Now Lemma \ref{L22} guaranties that $K_{m,n}$ is just
a frame graph in the spaces of dimension $m,
m+1,..., n+m-1, n+m$.
Proposition 2.2 of \cite{Hackney} shows that this result holds for a larger class
of bipartite graphs. We include it here:

 \begin{proposition}\label{p24}
Let $G$ be a bipartite graph which contains $K_{n,n}$
as an induced subgraph and its partite sets $U$ and $V$
are of size $m$ and $n$ where $m \geq n$. Then $G$ is just a frame graph in the inner product spaces of dimension $m, m+1,...,m+n-1, m+n$.
\end{proposition}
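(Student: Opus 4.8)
The plan is to prove that $mr_+^{\mathbb{R}}(G)=mr_+^{\mathbb{C}}(G)=m$; since the order of $G$ equals $m+n$, the assertion then follows at once from Lemmas \ref{L21} and \ref{L22} together with the discussion following Lemma \ref{L22}. The lower bound is easy: the partite set $U$ is an independent set of size $m$, and $G$, being connected and non-trivial, has no isolated vertex, so in any frame $\varphi$ with $G(\varphi)=G$ the $m$ vectors indexed by $U$ are nonzero and pairwise orthogonal, hence linearly independent; thus $mr_+^{\mathbb{C}}(G)\ge m$, and a fortiori $mr_+^{\mathbb{R}}(G)\ge m$.

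For the upper bound I first record the structural consequence of the hypothesis. Since $G$ is connected and bipartite, its bipartition $\{U,V\}$ is unique; since moreover the induced $K_{n,n}$ is itself connected and bipartite and every edge of $G$ runs between $U$ and $V$, the two parts of this $K_{n,n}$ must be $A:=U\cap W$ and $B:=V\cap W$, where $W$ is its vertex set, with $|A|=|B|=n$. As $|V|=n$, this forces $B=V$, so \emph{every vertex of $V$ is adjacent to every vertex of $A$}; in particular $n\ge 1$ and $A\subseteq N(v)$ for every $v\in V$.

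Next I construct a frame for $\mathbb{R}^m$ whose frame graph is $G$. Label the coordinates of $\mathbb{R}^m$ by $U$ so that the first $n$ of them correspond to $A$, and let $\{e_u\}_{u\in U}$ be the standard orthonormal basis; assign $e_u$ to the vertex $u$, which accounts for the absence of edges inside $U$. It then suffices to assign to the vertices $v_1,\dots,v_n$ of $V$ vectors $c_1,\dots,c_n\in\mathbb{R}^m$ that are pairwise orthogonal (as $V$ is independent) and have $\mathrm{supp}(c_j)=N(v_j)$; this makes the $U$–$V$ edges come out right and forces $c_j\ne 0$ since $A\subseteq N(v_j)$. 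Collect them into the matrix $C=(c_1\ \cdots\ c_n)$ and split its rows as $\binom{C_1}{C_2}$ according to $U=A\cup(U\setminus A)$, so that $C_1$ is $n\times n$ with all entries required to be nonzero, while the zero–nonzero pattern of $C_2$ is dictated by the edges between $U\setminus A$ and $V$. Take $C_2$ to be the $0$–$1$ matrix realizing that pattern, set $D:=C_2^{T}C_2\succeq 0$, and pick a diagonal matrix $\Lambda$ with $\Lambda\succ D$, which plainly exists. Then $P:=\Lambda-D$ is positive definite, so for a generic orthogonal matrix $O$ the matrix $C_1:=OP^{1/2}$ has all entries nonzero and satisfies $C_1^{T}C_1=P$; consequently $C^{T}C=C_1^{T}C_1+C_2^{T}C_2=\Lambda$ is diagonal, i.e.\ $c_1,\dots,c_n$ are indeed pairwise orthogonal. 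The vectors $\{e_u\}_{u\in U}\cup\{c_j\}_{j=1}^{n}$ span $\mathbb{R}^m$ and are nonzero, hence form a frame, whose Gramian has diagonal blocks $I_m$ and $\Lambda$ and off-diagonal block $C$; since $I_m$ and $\Lambda$ are diagonal and the $(u,v_j)$-entry of $C$ is nonzero exactly when $u\in N(v_j)$, this Gramian has graph $G$, whence $mr_+^{\mathbb{R}}(G)\le m$. Together with the lower bound and $mr_+^{\mathbb{C}}\le mr_+^{\mathbb{R}}$, this yields $mr_+(G)=m$, as wanted.

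I expect the only real difficulty to lie in the upper bound, specifically in the step of writing a prescribed positive definite matrix as $C_1^{T}C_1$ with $C_1$ having \emph{no zero entry}. This is precisely where the hypothesis is used: it forces every neighbourhood $N(v_j)$ to contain the common block $A$ of full size $n$, so all the required nonzero entries can be placed there and then cancelled against $C_2$. Without such a hypothesis the orthogonality equations $\langle c_j,c_k\rangle=0$ need not be solvable with the prescribed supports — already $G=P_4$ is bipartite with parts of sizes $2$ and $2$ yet contains no $K_{2,2}$, and here $mr_+(P_4)=3>2$. The genericity of $O$ is unproblematic: since $P^{1/2}$ is invertible, each entry of $OP^{1/2}$ is a nontrivial linear functional of $O$, vanishing only on a measure-zero subset of $O(n)$, and a generic $O$ avoids the finitely many such subsets.
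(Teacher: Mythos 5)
Your argument is correct, and it is genuinely different from what the paper does, because the paper does not prove this proposition at all: it imports it verbatim as Proposition 2.2 of \cite{Hackney}, and the only construction given in the text is an explicit frame for the special case $G=K_{m,n}$ (the set $\{e_i\}_{i=1}^{m}\cup\{e_1+\dots+e_{i-1}+\tfrac{2-n}{2}e_i+e_{i+1}+\dots+e_m\}_{i=1}^{n}$). So what you supply is a self-contained replacement for the external citation. Your lower bound is exactly the paper's own remark preceding the proposition (an independent set of size $m$ in a graph without isolated vertices forces $\dim\mathcal{H}\ge m$, i.e.\ $mr_+^{\mathbb{C}}(G)\ge m$), and the passage from $mr_+(G)=m$ to the list of admissible dimensions via Lemmas \ref{L21} and \ref{L22} is also how the paper argues. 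The new content is the upper bound, and it checks out: the induced $K_{n,n}$ does force $B=W\cap V=V$ (since $U\cap W$ and $V\cap W$ are independent sets partitioning $W$, they must be the two parts of the $K_{n,n}$), so every $v\in V$ dominates the fixed $n$-set $A$; the orthogonalization device --- fix the rows of $C$ outside $A$ as a $0$--$1$ matrix $C_2$, choose diagonal $\Lambda\succ C_2^{T}C_2$, and realize $C_1^{T}C_1=\Lambda-C_2^{T}C_2$ with a dense $C_1=OP^{1/2}$ for generic $O$ --- correctly produces $n$ pairwise orthogonal nonzero vectors with supports exactly $N(v_j)$, and the genericity claim is sound because each entry of $OP^{1/2}$ is a nontrivial linear functional of a row of $O$. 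The resulting Gramian has rank $m$ and graph $G$, giving $mr_+^{\mathbb{R}}(G)\le m$. What your approach buys is independence from \cite{Hackney}; what the citation buys the authors is brevity. Your counterexample remark ($P_4$ has parts of size $2$ but $mr_+(P_4)=3$) is also a useful sanity check showing the $K_{n,n}$ hypothesis is doing real work.
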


 \section{Product of graphs and frame graph}

 As we saw in the previous section, to characterize a
graph as frame graph we just need
to find its minimum positive semidefinite rank.
Some well known classes of graphs such as trees
and their complements, cycles and their
complements, complete and complete bipartite graphs have known
minimum positive semidefinite rank
(see \cite{Li}), so the dimension of all the spaces which
they are frame graphs in them can be obtained by Lemma \ref{L22}.
In this section, we try to find the minimum positive semidefinite rank
of the product of some prominent classes of graphs and use them to
characterize these graphs as frame graphs. The following definition and proposition, which can
be found in \cite{Hackney},
provides a lower bound for $ mr_+(G)$ and will be useful
later in this work.

Let $G=(V, E)$ be a connected graph and $S = \{v_1, . . . , v_m\}$
be an ordered set of vertices of $G$.
Denote by $G_k$ the subgraph induced by $v_1, v_2, . . . , v_k$
for all $k= 1, 2,..., m $. Let $H_k=(V_k, E_k)$
be the connected component of $G_k$ such that $v_k \in V_k$.
If for each $k$, there exists $w_k \in V$ such that
$w_k \neq v_l$ for $l \leq k$, $\{v_k, w_k\} \in E$ and
$\{w_k, v_l\} \notin E$ for all $v_l \in V_k$ with $l \neq k$,
then $S$ is called a vertex set of ordered subgraphs (or $OS$-vertex set).
The {\it OS-number} of a graph $G$, denoted by $OS(G)$, is the maximum
cardinality among all $OS$-vertex sets of $G$.
For example $OS(T)=m-1$ where $T$ is a tree on $m$ vertices
(see \cite[Proposition 3.9]{Hackney}) and, clearly, $OS(K_n)=1$.
Note that the
$OS$-number is not defined for a single vertex, $K_1$, and so, in this
section, we assume that all graphs have more than one vertex.

 \begin{proposition}\label{p31} \cite[Proposition 3.3 ]{Hackney} and
\cite[Observation 3.14.]{Fallat}.
Let $G$ be a connected graph.
Then $OS(G) \leq mr_+^{\mathbb{C}}(G) \leq mr_+^{\mathbb{R}}(G) \leq cc(G)$.
\end{proposition}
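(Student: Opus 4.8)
The plan is to establish the three inequalities separately. The middle inequality $mr_+^{\mathbb{C}}(G) \leq mr_+^{\mathbb{R}}(G)$ is immediate, since $S_n^+ \subseteq \mathcal{H}_n^+$, so every real positive semidefinite matrix realizing $G$ is also a complex Hermitian one of the same rank; this was already noted in Section 2. Thus the real content is the outer two bounds: the lower bound $OS(G) \leq mr_+^{\mathbb{C}}(G)$ and the upper bound $mr_+^{\mathbb{R}}(G) \leq cc(G)$.

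For the upper bound $mr_+^{\mathbb{R}}(G) \leq cc(G)$, I would argue directly from a minimum clique cover. Suppose $Q_1, \ldots, Q_c$ is a clique cover of $G$ with $c = cc(G)$. For each clique $Q_j$, form the $n \times n$ real symmetric matrix $A_j$ which has entry $1$ in position $(u,v)$ whenever both $u, v \in Q_j$ and $0$ otherwise; this is the (padded) all-ones matrix on the index set $Q_j$, hence positive semidefinite of rank $1$. Set $A = \sum_{j=1}^{c} A_j$. Then $A$ is positive semidefinite of rank at most $c$. For $i \neq j$, the $(i,j)$ entry of $A$ counts the number of cliques in the cover containing both $i$ and $j$, which is a nonnegative integer; it is positive precisely when $i$ and $j$ lie together in some $Q_k$, i.e.\ exactly when $\{i,j\} \in E$ (using that the $Q_k$ cover all edges and are themselves cliques of $G$). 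Since all the off-diagonal entries produced are nonnegative, no cancellation occurs, so $\mathcal{G}(A) = G$ and $mr_+^{\mathbb{R}}(G) \leq \operatorname{rank}(A) \leq c = cc(G)$.

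For the lower bound $OS(G) \leq mr_+^{\mathbb{C}}(G)$, take any $A \in \mathcal{H}_n^+$ with $\mathcal{G}(A) = G$ and $\operatorname{rank}(A) = mr_+^{\mathbb{C}}(G)$, and let $S = \{v_1, \ldots, v_m\}$ be an $OS$-vertex set of maximum cardinality $m = OS(G)$, with associated witnesses $w_1, \ldots, w_m$. Writing $A = B^*B$ for some $k \times n$ matrix $B$ with $k = \operatorname{rank}(A)$, the columns $b_1, \ldots, b_n$ of $B$ form a frame, with $\langle b_u, b_v \rangle \neq 0$ iff $\{u,v\}$ is an edge of $G$ (for $u \neq v$). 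The claim is that the vectors $b_{v_1}, \ldots, b_{v_m}$ are linearly independent, which forces $m \leq k = mr_+^{\mathbb{C}}(G)$. To see the independence, suppose $\sum_{l=1}^m c_l b_{v_l} = 0$ and let $l_0$ be the largest index with $c_{l_0} \neq 0$. Pairing this relation with $b_{w_{l_0}}$ and using the defining properties of the $OS$-vertex set — namely $\{v_{l_0}, w_{l_0}\} \in E$ so $\langle b_{w_{l_0}}, b_{v_{l_0}} \rangle \neq 0$, while $w_{l_0}$ is not adjacent to any $v_l \in V_{l_0}$ other than $v_{l_0}$, and the $v_l$ with $l < l_0$ that could have a nonzero coefficient all lie in $H_{l_0}$ (this is the point that needs care: one restricts the original dependence to the connected component $H_{l_0}$ of $v_{l_0}$ in $G_{l_0}$, where it still holds) — one concludes $c_{l_0} \langle b_{w_{l_0}}, b_{v_{l_0}} \rangle = 0$, a contradiction.

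The main obstacle is the bookkeeping in the lower bound: one must carefully pass from the global linear dependence among all the $b_{v_l}$ to a dependence supported on a single connected component $H_{l_0}$, so that the non-adjacency condition ``$\{w_{l_0}, v_l\} \notin E$ for $v_l \in V_{l_0}$, $l \neq l_0$'' can be applied to kill all terms except the leading one. Since this argument is essentially that of \cite[Proposition 3.3]{Hackney} and \cite[Observation 3.14]{Fallat}, I would cite those sources for the detailed verification rather than reproduce it in full.
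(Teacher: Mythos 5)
The paper offers no proof of this proposition---it is simply quoted from \cite[Proposition 3.3]{Hackney} and \cite[Observation 3.14]{Fallat}---so the only comparison to make is with those sources, and your sketch reproduces their standard arguments correctly. The middle inequality is indeed immediate, and your clique-cover bound is exactly the usual one: $A=\sum_j \mathbf{1}_{Q_j}\mathbf{1}_{Q_j}^{T}$ is a sum of $cc(G)$ rank-one positive semidefinite matrices whose entries are all nonnegative, so no cancellation can occur and $\mathcal{G}(A)=G$. For the one step you flag in the lower bound: the dependence $\sum_{l\le l_0} c_l b_{v_l}=0$ really does restrict to the component $H_{l_0}$, because vertices lying in distinct connected components of the induced subgraph $G_{l_0}$ are non-adjacent in $G$, hence their vectors are pairwise orthogonal; the partial sums taken over the different components therefore lie in mutually orthogonal subspaces and must each vanish separately. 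With that one line supplied, your argument is complete and matches the proofs in the cited references.
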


The {\it vertex connectivity} $\kappa(G)$ of a
connected graph $G=(V, E)$ is the minimum size of
$S \subseteq V$ such that $G-S$ is disconnected or
a single vertex. The following result provides an upper bound
for the real minimum positive semidefinite rank.

\begin{proposition}\label{p32} \cite{Lovasz1989, Lovasz2000}.
For each graph $G$ on $n$ vertices, $mr_{+}^{\mathbb{R}}(G) \leq n-\kappa(G).$
\end{proposition}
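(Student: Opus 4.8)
The plan is to recast the inequality as a statement about frames and then build the required frame by induction on $n$. By Lemma~\ref{L21} and the discussion following Lemma~\ref{L22}, proving $mr_{+}^{\mathbb{R}}(G)\le n-\kappa(G)$ is equivalent to showing that $G$ is a frame graph in $\mathbb{R}^{n-\kappa(G)}$, i.e.\ that there exist nonzero vectors $v_{1},\dots,v_{n}\in\mathbb{R}^{n-\kappa(G)}$ spanning the space with $\langle v_{i},v_{j}\rangle\ne 0$ exactly when $\{i,j\}\in E(G)$ --- a faithful orthogonal representation of $G$. There is no cheap obstruction: an independent set would be a family of mutually orthogonal nonzero vectors, but for a $\kappa(G)$-connected graph one has $\alpha(G)\le n-\kappa(G)$ (a maximum independent set omits the $\ge\kappa(G)$ neighbours of any one of its vertices), so the target dimension is in principle large enough.

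I would in fact prove the stronger statement that every connected graph $G$ on $n$ vertices admits such a representation \emph{in general position}, meaning that every $n-\kappa(G)$ of the $v_{i}$ are linearly independent, proceeding by induction on $n$. The base case $\kappa(G)=1$ is handled by the Laplacian $L$ of $G$: it is positive semidefinite of rank $n-1$, its off-diagonal entries vanish precisely on the non-edges, and $\ker L=\mathrm{span}(\mathbf{1})$ has full support, so any $n-1$ columns of a factor $B$ with $B^{T}B=L$ are linearly independent, as required. For the inductive step assume $\kappa:=\kappa(G)\ge 2$ and choose a vertex $v$ with $\kappa(G-v)=\kappa-1$ --- any vertex of a minimum vertex cut works, or any vertex at all if $G=K_{n}$ --- noting that, since $\kappa\ge 2$, the graph $G-v$ is connected on $n-1$ vertices. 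By the inductive hypothesis $G-v$ has a general-position faithful orthogonal representation $\{v_{i}\}_{i\ne v}$ in $\mathbb{R}^{(n-1)-(\kappa-1)}=\mathbb{R}^{n-\kappa}$. It then remains to adjoin a vector $v_{v}$ for $v$. Let $U$ be the orthogonal complement of $\{v_{i}:i\not\sim v\}$; there are only $n-1-\deg v\le n-1-\kappa<n-\kappa$ such vectors, so general position makes them linearly independent and $\dim U=\deg v+1-\kappa\ge 1$; general position also makes $\{v_{i}:i\not\sim v\}\cup\{v_{i}\}$ linearly independent for each neighbour $i$ of $v$, hence $v_{i}\notin U^{\perp}$, i.e.\ $U\not\subseteq v_{i}^{\perp}$. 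Therefore a generic $v_{v}\in U$ is nonzero, orthogonal to every non-neighbour of $v$, and non-orthogonal to every neighbour, and $\{v_{1},\dots,v_{n}\}$ is then a faithful orthogonal representation of $G$ in $\mathbb{R}^{n-\kappa}$; its Gram matrix is positive semidefinite with graph $G$ and rank at most $n-\kappa$, which is the desired bound.

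The hard part is closing the induction: $v_{v}$ must be chosen so that the enlarged family $\{v_{1},\dots,v_{n}\}$ is again in general position, which forces $v_{v}$ also to avoid the span of every $(n-\kappa-1)$-element subset of the $v_{i}$, and a generic $v_{v}\in U$ achieves this exactly when $U$ is contained in none of those spans. Proving that last point --- equivalently, that a generic orthogonal representation of a $\kappa$-connected graph on $n$ vertices is automatically in general position in $\mathbb{R}^{n-\kappa}$ --- is the technical heart of the Lov\'asz--Saks--Schrijver analysis, and is carried out by a dimension count on the variety $\{(v_{1},\dots,v_{n}):\langle v_{i},v_{j}\rangle=0\text{ for }\{i,j\}\notin E(G)\}$; for these details I would appeal directly to \cite{Lovasz1989, Lovasz2000}.
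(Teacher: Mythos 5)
The paper offers no proof of Proposition~\ref{p32} at all: it is quoted verbatim from \cite{Lovasz1989, Lovasz2000}. Your outline is an accurate reconstruction of the Lov\'asz--Saks--Schrijver argument, and everything you actually verify is correct: the reformulation via Lemma~\ref{L21}, the sanity check $\alpha(G)\le n-\kappa(G)$, the Laplacian base case (rank $n-1$, kernel spanned by $\mathbf{1}$ with full support, hence general position), the existence of a vertex $v$ with $\kappa(G-v)=\kappa-1$, and the dimension counts $\dim U=\deg v+1-\kappa\ge 1$ and $v_i\notin\operatorname{span}\{v_j:j\not\sim v\}$ all follow correctly from the general-position inductive hypothesis. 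However, the step you defer --- choosing $v_v$ so that the enlarged family is again in general position, equivalently that a generic orthogonal representation of a $\kappa$-connected graph in $\mathbb{R}^{n-\kappa}$ is automatically in general position --- is not a finishing technicality but the entire content of the theorem; it is exactly the point where the 1989 argument contained the error repaired in \cite{Lovasz2000}, and without it the induction does not close (your inductive hypothesis cannot be applied at the next stage). So, read as a self-contained proof, the proposal has a genuine gap precisely there; read as an annotated citation, it does no less (and considerably more) than the paper, which simply invokes the reference.
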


 \subsection{Strong product}
The {\it strong product} of two graphs $G_1=(V_1, E_1)$ and
$G_2=(V_2, E_2 )$, denoted by $G_1 \boxtimes G_2$, is the graph with the vertex set
$V_1\times V_2$ such that $(u, v)$ is adjacent to $(u', v')$
if and only if (1) $u=u'$ and $\{v, v'\} \in E_2 $
or (2) $v=v'$ and $\{u, u'\} \in E_1$ or (3)
$\{u, u'\} \in E_1$ and
$\{v, v'\} \in E_2$. 

 \begin{theorem}\label{T33} Let $G$ be the strong product of
$P_n$ and $P_m$, i.e., $G=P_n \boxtimes P_n$. Then $G$
is just a frame graph in the inner product spaces of
dimension $(n-1)(m-1), (n-1)(m-1)+1,..., mn-1, mn$.
\end{theorem}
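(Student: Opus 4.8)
The plan is to reduce the statement to the single equality $mr_+^{\mathbb{R}}(G) = mr_+^{\mathbb{C}}(G) = (n-1)(m-1)$ for $G = P_n \boxtimes P_m$ (assuming, as everywhere in this section, $n,m \ge 2$). Since $|G| = nm$, Lemma \ref{L21} together with Lemma \ref{L22} --- in the form recorded in the discussion following Lemma \ref{L22} --- then immediately yields that $G$ is a frame graph precisely in the inner product spaces of dimension $(n-1)(m-1), (n-1)(m-1)+1, \dots, nm$. I would establish the equality by squeezing this common value between $OS(G)$ and $cc(G)$ by means of Proposition \ref{p31}, i.e.\ by proving $cc(G) \le (n-1)(m-1) \le OS(G)$.

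For the upper bound I would label the vertices of $G$ by pairs $(i,j)$ with $1 \le i \le n$ and $1 \le j \le m$, and for $1 \le i \le n-1$, $1 \le j \le m-1$ let $Q_{i,j}$ be the ``unit square'' $\{(i,j),(i+1,j),(i,j+1),(i+1,j+1)\}$. By the three adjacency rules of the strong product each $Q_{i,j}$ induces a copy of $K_4$, hence is a clique; moreover each vertex $(i,j)$ lies in $Q_{\min(i,n-1),\min(j,m-1)}$, and each edge of $G$ joins two vertices differing by at most $1$ in each coordinate, so it lies inside some $Q_{i,j}$. Thus the $(n-1)(m-1)$ cliques $Q_{i,j}$ cover $G$, and $cc(G) \le (n-1)(m-1)$.

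For the lower bound I would take $S = \{(i,j): 1 \le i \le n-1,\ 1 \le j \le m-1\}$, which has $(n-1)(m-1)$ vertices, ordered row by row with $j$ increasing inside each row, and exhibit it as an $OS$-vertex set. When $v_k = (i,j)$ is processed, $G_k$ is the induced subgraph on rows $1,\dots,i-1$ (all columns $1,\dots,m-1$) together with $(i,1),\dots,(i,j)$; this is connected, so $H_k = G_k$ and $V_k = \{v_1,\dots,v_k\}$. As witness I would take the ``diagonal'' vertex $w_k = (i+1,j+1)$, which exists because $i \le n-1$ and $j \le m-1$: it is adjacent to $v_k$ by adjacency rule (3), it lies in row $i+1$ so $w_k \notin V_k$ (hence $w_k \ne v_l$ for all $l \le k$), and among its at most eight neighbors the only one in $V_k$ is $(i,j)=v_k$ itself, since every other neighbor lies in a row $\ge i+1$ or else in row $i$ but in a column $> j$. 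Hence $S$ is an $OS$-vertex set and $OS(G) \ge (n-1)(m-1)$.

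Combining the two bounds, Proposition \ref{p31} gives $(n-1)(m-1) \le OS(G) \le mr_+^{\mathbb{C}}(G) \le mr_+^{\mathbb{R}}(G) \le cc(G) \le (n-1)(m-1)$, so $mr_+(G) = (n-1)(m-1)$, and the theorem follows as above. I expect the clique cover bound to be completely routine; the one point that needs care is the $OS$-argument, namely the pairing of the row-by-row ordering with the diagonal witness $(i+1,j+1)$ --- this is exactly the choice making the ``no extra adjacency'' requirement hold, because that vertex meets the already-listed staircase region in the single cell $v_k$ and nowhere else.
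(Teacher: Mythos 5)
Your proposal is correct and follows essentially the same route as the paper: reduce to $mr_+(G)=(n-1)(m-1)$ via Lemmas \ref{L21} and \ref{L22}, bound above by the clique cover of $(n-1)(m-1)$ unit-square copies of $K_4$, and bound below by exhibiting the $(n-1)\times(m-1)$ grid of vertices, ordered row by row, as an $OS$-vertex set with diagonal witnesses $(i+1,j+1)$. Your write-up merely supplies more of the verification details (connectedness of the staircase, the ``no extra adjacency'' check) that the paper leaves implicit.
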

\begin{proof} If we prove $mr_+(G)=(n-1)(m-1)$, then
the ''lifting'' argument of Lemma \ref{L22} completes the proof.
$G$ has a clique cover of $(n-1)(m-1)$ copies of $K_4$,
so, by Proposition \ref{p31}, $mr_+^{\mathbb{R}}(G) \leq (n-1)(m-1)$. Let $\{s_1, s_2,..., s_n\}$
and $\{t_1, t_2,..., t_m\}$ be the vertex sets of $P_n$ and $P_m$, respectively.
The ordered set $\bigcup_{i=1}^{n-1} \bigcup_{j=1}^{m-1}\{v_{i,j}\}$, where
$v_{i,j}=(s_i, t_j)$, with $w_{i,j}=(s_{i+1}, t_{j+1})$
is an $OS$-vertex set of length $(n-1)(m-1)$. Therefore $(n-1)(m-1) \leq OS(G)$.
Now Proposition \ref{p31} implies $(n-1)(m-1) \leq mr_+^{\mathbb{C}}(G)$.
\end{proof}
\subsection{Joins of graphs}
The {\it join} $G_1 \vee G_2$ of the graphs $G_1=(V_1, E_1)$
and $G_2=(V_2, E_2)$ with disjoint vertex and
edge sets is the graph with vertex
set $V_1 \bigcup V_2$ and edge set $E_1 \bigcup E_2$ together with
all the edges joining $V_1$ and $V_2$.

Suppose $G$ is decomposable into two graphs, $G_1=(V_1, E_1)$
and $G_2=(V_2, E_2)$,
sharing only one vertex $v$ such that if $u \in V_1$
and $w \in V_2$, then $\{u, w\} \in E$ only if $u=v$ or $w=v$.
Then $G_1$ and $G_2$ are joined at vertex $v$ and
$G$ is denoted by $G= G_1 . G_2$.

Let $G$ and $H$ be two connected graphs on two or more vertices.
Proposition $2.4$ of \cite{Hackney} states that $mr_+^{\mathbb{C}}(G\vee H)=max\{mr_+^{\mathbb{C}}(G), mr_+^{\mathbb{C}}(H)\}$
and by Theorem $3.4$ of \cite{Matthew} we have $mr_+^{\mathbb{C}}(G.H) = mr_+^{\mathbb{C}}(G)+mr_+^{\mathbb{C}}(H)$.
These, combined with Lemma \ref{L22}, give the following theorem.

\begin{theorem} \label{T34}
Let the graph $G$ be just a frame graph in
spaces of dimension $n', n'+1,.., n-1, n$
and $H$ be just a frame graph in the
spaces of dimension $m', m'+1,.., m-1, m$ where $n' \geq m'$. The followings are hold:
\begin{enumerate}
\item[(i)] $G\vee H$ is just a frame graph in the spaces of dimension $n', n'+1,...,m+n-1$ and $m+n$.
\item[(ii)] $G.H$ is just a frame graph in the spaces of dimension $n'+m',n'+m'+1,...,m+n-2$ and $m+n-1$.
\end{enumerate}
\end{theorem}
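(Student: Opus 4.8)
The plan is to deduce Theorem \ref{T34} directly from the two cited formulas for the minimum positive semidefinite rank of joins and vertex-sums, combined with the ``lifting'' mechanism of Lemma \ref{L22}. Recall that by Lemmas \ref{L21} and \ref{L22}, a connected graph $F$ on $N$ vertices is just a frame graph in the spaces of dimension $mr_+^{\mathbb{C}}(F), mr_+^{\mathbb{C}}(F)+1, \dots, N$ (in the complex setting), and similarly with $mr_+^{\mathbb{R}}$ in the real setting; so to pin down the full list of admissible dimensions it suffices to compute $mr_+^{\mathbb{C}}(F)$ and $mr_+^{\mathbb{R}}(F)$ and to know $N = |F|$. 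Thus the hypotheses on $G$ and $H$ translate to $mr_+^{\mathbb{C}}(G) = mr_+^{\mathbb{R}}(G) = n'$, $|G| = n$, $mr_+^{\mathbb{C}}(H) = mr_+^{\mathbb{R}}(H) = m'$, and $|H| = m$, with $n' \ge m'$.

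For part (i), I would first record that $|G \vee H| = n + m$. The cited Proposition 2.4 of \cite{Hackney} gives $mr_+^{\mathbb{C}}(G \vee H) = \max\{mr_+^{\mathbb{C}}(G), mr_+^{\mathbb{C}}(H)\} = \max\{n', m'\} = n'$, using $n' \ge m'$. The same proof, carried out over $\mathbb{R}$, gives $mr_+^{\mathbb{R}}(G \vee H) = \max\{mr_+^{\mathbb{R}}(G), mr_+^{\mathbb{R}}(H)\} = n'$ as well (the argument in \cite{Hackney} is field-independent: it stacks orthonormal-type decompositions and is symmetric in the two blocks). Hence $mr_+^{\mathbb{C}}(G \vee H) = mr_+^{\mathbb{R}}(G \vee H) = n'$, and $G \vee H$ is connected, so by Lemma \ref{L22} it is just a frame graph in the spaces of dimension $n', n'+1, \dots, n+m-1, n+m$, which is exactly the claimed list.

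For part (ii) I would argue analogously: $|G . H| = n + m - 1$, since $G$ and $H$ share a single common vertex. Theorem 3.4 of \cite{Matthew} gives $mr_+^{\mathbb{C}}(G . H) = mr_+^{\mathbb{C}}(G) + mr_+^{\mathbb{C}}(H) = n' + m'$, and again the same theorem applied over $\mathbb{R}$ — or the general principle that cut-vertex decompositions are additive for $mr_+$ over either field — yields $mr_+^{\mathbb{R}}(G . H) = n' + m'$. So $mr_+^{\mathbb{C}}(G . H) = mr_+^{\mathbb{R}}(G . H) = n' + m'$, and Lemma \ref{L22} then shows $G . H$ is just a frame graph in the spaces of dimension $n'+m', n'+m'+1, \dots, n+m-2, n+m-1$.

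The genuinely non-routine point, and the one I would be most careful about, is the claim that the cited results hold over $\mathbb{R}$ and not merely over $\mathbb{C}$, so that the complex and real minimum positive semidefinite ranks coincide for $G \vee H$ and $G . H$. For the join this is essentially immediate from the construction in \cite{Hackney} (block-diagonal / rank-one perturbations that work identically over $\mathbb{R}$), and for the vertex-sum the additivity under cut vertices is standard over any subfield of $\mathbb{C}$; I would cite \cite{Matthew, Fallat, Barioli} for the real version. One also needs the trivial bookkeeping that $|G \vee H|$ and $|G . H|$ are as stated and that both graphs are connected (so that Lemma \ref{L22} and the ``just a frame graph in dimensions $mr_+, \dots, |F|$'' principle apply), but these are routine.
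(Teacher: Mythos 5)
Your proposal is correct and follows essentially the same route as the paper, which likewise derives the theorem immediately from Proposition 2.4 of \cite{Hackney} for the join, Theorem 3.4 of \cite{Matthew} for the vertex-sum, and the lifting of Lemma \ref{L22}. If anything you are more careful than the paper, which cites only the complex versions of those formulas and does not comment on the real case that you rightly flag as the one nontrivial point.
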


\subsection{Cartesian product}
The {\it Cartesian product} of two graphs $G_1=(V_1, E_1)$ and
$G_2=(V_2, E_2 )$, denoted by $G_1 \Box G_2$, is the graph with vertex set
$V_1\times V_2$ such that $(u, v)$ is adjacent to $(u', v')$
if and only if (1)$u=u'$ and $\{v, v'\} \in E_2 $
or (2) $v=v'$ and $\{u, u'\} \in E_1$ . 
The set of vertices associated with (the same) $OS$-vertex
set in each copy of $G$ or $H$ is an $OS$-vertex set for $G\Box H$. Then we have the following lemma.
\begin{lemma} \label{l35}
Let $G$ and $H$ be the graphs of order $n$ and $m$, respectively. Then
$$\max \{nOS(H), mOS(G)\} \leq OS(G \Box H).$$
\end{lemma}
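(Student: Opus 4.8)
The plan is to produce an explicit $OS$-vertex set of $G\Box H$ of cardinality $n\,OS(H)$; since $G\Box H\cong H\Box G$, interchanging the roles of $G$ and $H$ then gives an $OS$-vertex set of cardinality $m\,OS(G)$, and together these yield the claimed bound. Fix an $OS$-vertex set $u_1,\dots,u_k$ of $H$ with $k=OS(H)$ and witnesses $w_1,\dots,w_k$ as in the definition preceding Proposition \ref{p31}, and fix an arbitrary enumeration $g_1,\dots,g_n$ of $V(G)$. I would list the $nk$ vertices $(g_i,u_j)$ of $G\Box H$ lexicographically in the pair $(j,i)$ --- that is, level by level in the order $u_1,\dots,u_k$, and within each level in the order $g_1,\dots,g_n$ --- and propose $(g_i,w_j)$ as the witness attached to $(g_i,u_j)$.

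Writing $W$ for the set of vertices that precede or equal $(g_i,u_j)$ in this list, so that every vertex of $W$ has second coordinate in $\{u_1,\dots,u_j\}$, the first two requirements in the definition of an $OS$-vertex set are immediate: $(g_i,w_j)$ is adjacent to $(g_i,u_j)$ because $\{w_j,u_j\}\in E(H)$, and $(g_i,w_j)\notin W$ because $w_j\notin\{u_1,\dots,u_j\}$ by the $OS$-property of $H$. The content of the proof is the third requirement: if $C$ denotes the connected component of the subgraph induced on $W$ that contains $(g_i,u_j)$, then $(g_i,w_j)$ must have no neighbour in $C$ other than $(g_i,u_j)$.

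Here I would first identify the neighbours of $(g_i,w_j)$ inside $W$. Since $w_j\notin\{u_1,\dots,u_j\}$, no vertex $(g_l,w_j)$ lies in $W$, so by the definition of the Cartesian product the neighbours of $(g_i,w_j)$ in $W$ are precisely $(g_i,u_j)$ together with the vertices $(g_i,u_{j'})$ with $j'<j$ and $\{w_j,u_{j'}\}\in E(H)$. It remains to rule out that such a $(g_i,u_{j'})$ lies in $C$, and for this I would use a projection argument: any edge of $G\Box H$ joins vertices whose $H$-coordinates are equal or adjacent in $H$, and all vertices of $W$ have $H$-coordinate in $\{u_1,\dots,u_j\}$, so a path inside $W$ from $(g_i,u_j)$ to $(g_i,u_{j'})$ would project onto a walk from $u_j$ to $u_{j'}$ in the subgraph $H_j$ of $H$ induced by $u_1,\dots,u_j$, placing $u_{j'}$ in the connected component of $u_j$ in $H_j$. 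But the defining property of the witness $w_j$ says exactly that $w_j$ is non-adjacent to every vertex of that component other than $u_j$; since $u_{j'}\neq u_j$ and $\{w_j,u_{j'}\}\in E(H)$, this forces $(g_i,u_{j'})\notin C$. Hence the listed vertices form an $OS$-vertex set and $OS(G\Box H)\ge n\,OS(H)$, and the symmetric argument completes the proof.

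I expect the component bookkeeping in the last step to be the only delicate point: within the single copy $\{g_i\}\times V(H)$ the non-adjacency is transparent from the $OS$-property of $H$, but one must also exclude paths in $W$ that wander through other copies of $H$, and it is precisely the invariance of the $H$-coordinate under $G$-edges that lets the projection argument do this. The rest --- checking adjacency of the witness and its absence from $W$, and the commutativity $G\Box H\cong H\Box G$ --- is routine.
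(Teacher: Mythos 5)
Your proof is correct and is a careful elaboration of the same construction the paper merely asserts in the one sentence preceding Lemma \ref{l35} (take the union of the $OS$-vertex sets over all copies of $H$, and symmetrically for $G$): you supply the two details the paper omits, namely the level-by-level ordering of the vertices $(g_i,u_j)$ --- which is genuinely needed, since the naive copy-by-copy ordering can violate the component condition --- and the projection-to-$H$ argument that verifies that condition. Nothing further is required.
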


 The connectivity of the graph $G\Box H$ can be determined by
knowing the minimum degree and the connectivity of the components
$G$ and $H$.
\begin{proposition} \label{p36} \cite{Spacapan}
Let $G$ and $H$ be the graphs on $n$ and $m$
vertices, respectively. Then $\kappa(G \Box H)= \min
\{n \kappa(H), m \kappa(G), \delta(G)+\delta(H)\}.$
\end{proposition}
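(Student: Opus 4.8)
\medskip
\noindent\textbf{Proof proposal.}
The plan is to prove the two inequalities
$$\kappa(G\Box H)\ \le\ \min\{n\kappa(H),\,m\kappa(G),\,\delta(G)+\delta(H)\}\ \le\ \kappa(G\Box H)$$
separately. The upper bound is the easy half: I would produce three separators. If $S_H$ is a minimum vertex cut of $H$, then $V(G)\times S_H$ separates $G\Box H$, because along any walk avoiding $V(G)\times S_H$ the $H$-coordinate stays inside one component of $H-S_H$; this needs $H-S_H$ to be disconnected, and the only exception, $H=K_m$, is harmless since then $n\kappa(H)=n(m-1)\ge\delta(G)+\delta(H)$, so this term does not affect the minimum. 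Symmetrically $S_G\times V(H)$ realizes the bound $m\kappa(G)$. Finally, since $|G|,|H|\ge 2$ the product $G\Box H$ is connected, non-complete and has more than two vertices, so the open neighbourhood of a vertex $(u_0,v_0)$ of minimum degree is a separator of size $\deg_G(u_0)+\deg_H(v_0)$; minimizing over $(u_0,v_0)$ gives $\delta(G)+\delta(H)$.

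For the lower bound I would take a minimum vertex cut $S$, fix one component $U$ of $(G\Box H)-S$ and let $W$ be the union of the remaining components, so no edge joins $U$ to $W$. For $u\in V(G)$ let $S_u\subseteq V(H)$ be the trace of $S$ on the fiber $\{u\}\times V(H)\cong H$, and for $v\in V(H)$ let $S^v\subseteq V(G)$ be the trace on $V(G)\times\{v\}\cong G$, so $|S|=\sum_u|S_u|=\sum_v|S^v|$. The basic observation is that if the fiber over $u$ meets both $U$ and $W$ then $S_u$ separates them inside that copy of $H$, hence $|S_u|\ge\kappa(H)$; likewise $|S^v|\ge\kappa(G)$ whenever the fiber over $v$ meets both sides. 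So either $|S_u|\ge\kappa(H)$ for every $u$ — giving $|S|\ge n\kappa(H)$ — or some $u_0$ has $|S_{u_0}|<\kappa(H)$, in which case $H-S_{u_0}$ is connected on at least two vertices; and similarly either $|S|\ge m\kappa(G)$ or some $v_0$ has $G-S^{v_0}$ connected. In the first alternative of each dichotomy $|S|$ is already at least one of the three quantities, hence at least their minimum, so I may assume a ``clean'' $H$-fiber over $u_0$ and a ``clean'' $G$-fiber over $v_0$ both exist; being connected after deletion of their trace, each of them lies entirely in $U$ or entirely in $W$.

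The remaining case — clean fibers of both kinds exist — is where all the work is, and the target is $|S|\ge\delta(G)+\delta(H)$. The idea is to play the clean fiber over $u_0$ against the clean fiber over $v_0$ through the perfect matchings that join parallel fibers: if the two clean fibers sit on opposite sides then the vertex $(u_0,v_0)$ is forced into $S$ and, pushing the same matching argument, so is an entire neighbourhood; and if they sit on the same side, then the ``shadow'' sets $\{u:\text{the fiber over }u\text{ meets }W\}$ and $\{v:\text{the fiber over }v\text{ meets }W\}$ behave like vertex cuts in $G$ and in $H$ whose preimages, together with the matchings and the standing hypothesis that no fiber trace reaches size $\kappa$, confine the edge-boundary of $W$ to $S$ around a single vertex, whose full neighbourhood (of size at least $\delta(G)+\delta(H)$) then lies in $S$. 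The delicate point — deciding which vertex's neighbourhood is swallowed and checking that no boundary vertex escapes $S$ — is a somewhat intricate bookkeeping of how the components of $(G\Box H)-S$ interleave with the two fiber structures; this is exactly the obstacle, and it is precisely the argument carried out in \cite{Spacapan}, which I would follow. Combining the three resulting bounds with the upper bound then closes the proof.
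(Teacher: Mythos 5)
The paper does not prove this proposition at all: it is quoted verbatim from \cite{Spacapan}, so there is no in-paper argument to compare yours against. Measured as a standalone proof, your upper bound is complete and correct (including the observation that the degenerate case $H=K_m$, where $V(G)\times S_H$ fails to separate, is absorbed by the inequality $n(m-1)\ge\delta(G)+\delta(H)$, i.e.\ $(n-1)(m-2)\ge 0$), and your first reductions of the lower bound are also sound: if every $H$-fiber trace satisfies $|S_u|\ge\kappa(H)$ then $|S|\ge n\kappa(H)$, and a fiber with small trace survives as a connected set and hence lies in a single component of $(G\Box H)-S$.

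The genuine gap is the remaining case, which is the entire content of Spacapan's theorem: showing $|S|\ge\delta(G)+\delta(H)$ when both a clean $H$-fiber (over $u_0$) and a clean $G$-fiber (over $v_0$) exist. Your sketch for the ``opposite sides'' subcase correctly forces $(u_0,v_0)\in S$, but the next claim --- that ``an entire neighbourhood'' is then swallowed by $S$ --- does not follow from what you wrote: the neighbours $(u_0,v)$ of $(u_0,v_0)$ lie in $S_{u_0}$ or in the component $U$ containing the clean $H$-fiber, while the neighbours $(u,v_0)$ lie in $S^{v_0}$ or in the other component $W$, so nothing stated so far places $\deg_G(u_0)+\deg_H(v_0)$ vertices inside $S$. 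The ``same side'' subcase is even less constrained. You acknowledge this and explicitly defer to \cite{Spacapan}; that is an honest and, given that the paper itself only cites the result, defensible move, but it means the hard half of the proposition is asserted rather than proved. To close the gap you would need Spacapan's actual bookkeeping (or an alternative, e.g.\ a Menger-type argument constructing $\delta(G)+\delta(H)$ internally disjoint paths between two non-adjacent vertices of $G\Box H$).
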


 \begin{theorem} \label{T37} The graph $G= T \square K_m$,
where $T$ is a tree on $n$ vertices,
is just a frame graph in the inner product spaces of
dimension $mn-m, mn-m+1,..., mn-1, mn$.
\end{theorem}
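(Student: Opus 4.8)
The plan is to compute $mr_+(G)$ exactly and then invoke the lifting argument of Lemma~\ref{L22}: once we know $mr_+^{\mathbb{C}}(G)=mr_+^{\mathbb{R}}(G)=mn-m$, the discussion following Lemma~\ref{L22} shows that $G$ is a frame graph precisely in the spaces of dimension $mn-m,mn-m+1,\dots,mn$, which is exactly the claim. So everything reduces to the two inequalities $mn-m\le mr_+^{\mathbb{C}}(G)$ and $mr_+^{\mathbb{R}}(G)\le mn-m$.

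For the lower bound I would use the OS-number. Recall (as noted before Proposition~\ref{p31}) that $OS(T)=n-1$ for a tree $T$ on $n$ vertices, while $OS(K_m)=1$. Applying Lemma~\ref{l35} to $T\Box K_m$, with $T$ of order $n$ and $K_m$ of order $m$, gives
$$OS(T\Box K_m)\ \ge\ \max\{\,n\cdot OS(K_m),\ m\cdot OS(T)\,\}\ =\ \max\{\,n,\ m(n-1)\,\}\ =\ m(n-1),$$
the last equality because $(n-1)(m-1)\ge 1$ forces $m(n-1)\ge n$. Proposition~\ref{p31} then yields $m(n-1)\le OS(G)\le mr_+^{\mathbb{C}}(G)$.

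For the upper bound the naive clique-cover estimate of Proposition~\ref{p31} is not good enough: $T\Box K_m$ is covered by the $n$ copies of $K_m$ sitting over the vertices of $T$ together with one $K_2$ per edge of each of the $n-1$ matchings induced by the edges of $T$, and since these matching edges lie in no triangle this cannot be improved, so $cc(G)=n+m(n-1)>m(n-1)$. Instead I would use the connectivity bound of Proposition~\ref{p32}. By Proposition~\ref{p36}, with $\kappa(K_m)=\delta(K_m)=m-1$ and $\kappa(T)=\delta(T)=1$,
$$\kappa(T\Box K_m)\ =\ \min\{\,n(m-1),\ m,\ 1+(m-1)\,\}\ =\ m,$$
again using $n(m-1)\ge m$. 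Hence $mr_+^{\mathbb{R}}(G)\le mn-\kappa(G)=mn-m$.

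Combining the two bounds with $mr_+^{\mathbb{C}}(G)\le mr_+^{\mathbb{R}}(G)$ gives $m(n-1)\le mr_+^{\mathbb{C}}(G)\le mr_+^{\mathbb{R}}(G)\le m(n-1)$, so $mr_+(G)=mn-m$, and Lemma~\ref{L22} finishes the proof. The one step that needs care is the upper bound: one has to recognize that the clique-cover route fails here and turn instead to the vertex-connectivity bound, where the exact formula of Proposition~\ref{p36} makes $\kappa(T\Box K_m)=m$ transparent.
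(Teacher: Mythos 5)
Your proposal is correct and follows exactly the paper's argument: the lower bound $mn-m\le OS(G)\le mr_+^{\mathbb{C}}(G)$ via Lemma~\ref{l35} and Proposition~\ref{p31}, the upper bound $mr_+^{\mathbb{R}}(G)\le mn-\kappa(G)=mn-m$ via Propositions~\ref{p32} and~\ref{p36}, and the lifting argument of Lemma~\ref{L22} to conclude. You simply spell out the computations of $OS$ and $\kappa$ that the paper leaves implicit.
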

\begin{proof} It is sufficient to show that $mr_+(G)=mn-m$
and use the "lifting argument'' of Lemma \ref{L22}.

 By Lema \ref{l35}, $mn-m \leq OS(G)$, so Proposition \ref{p31}
implies $mn-m \leq mr_+^{\mathbb{C}}(G)$. On the other hand,
using Propositions \ref{p32} and \ref{p36},
we have $mr_{+}^{\mathbb{R}}(G) \leq mn-m$. Using
$mr_{+}(G) \leq mr_{+}^{\mathbb{R}}(G)$ implies that
$mr_{+}^{\mathbb{C}}(G) = mr_{+}^{\mathbb{R}}(G)=mn-m$ which completes
the proof.
\end{proof}
It is a conjecture that $mr_{+}(C_m \Box P_n)= mn - \min \{m, 2n\}$
(See \cite{peter}). This conjecture is valid for the case $n=2$(\cite{peter}).
The above theorem shows that it also holds
for the case $m=3$.
\begin{corollary} \label{C38}
$mr_{+}(C_3 \Box P_n)=3n-3$.
\end{corollary}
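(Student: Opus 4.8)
The plan is to recognize Corollary \ref{C38} as an immediate specialization of Theorem \ref{T37}. First I would observe that the triangle $C_3$ is literally the complete graph $K_3$, and that the path $P_n$ is a tree on $n$ vertices. Since the Cartesian product is commutative up to isomorphism, $C_3 \Box P_n = K_3 \Box P_n \cong P_n \Box K_3$, which is precisely a graph of the form $T \Box K_m$ with $T = P_n$ a tree on $n$ vertices and $m = 3$.

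Next I would simply invoke Theorem \ref{T37}, which asserts that $mr_{+}(T \Box K_m) = mn - m$ whenever $T$ is a tree on $n$ vertices (this is the content of its "just a frame graph in dimensions $mn-m, \ldots, mn$" conclusion, the lower endpoint being $mr_{+}$). Substituting $T = P_n$ and $m = 3$ gives $mr_{+}(C_3 \Box P_n) = 3n - 3$, as claimed. Implicitly this presumes $n \geq 2$, so that both factors are non-trivial connected graphs and the $OS$-number argument underlying Proposition \ref{p31} (and hence Theorem \ref{T37}) applies; for $n = 1$ the product degenerates to $K_3$, with $mr_{+} = 1$.

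There is no genuine obstacle: the proof is a one-line consequence once one notices $C_3 = K_3$ and that $P_n$ qualifies as a tree. The only points requiring a moment's care are (i) matching the two roles of the letter $n$ — the tree $P_n$ has exactly $n$ vertices, which is the same $n$ appearing in the formula $mn - m$ — and (ii) checking consistency with the conjecture $mr_{+}(C_m \Box P_n) = mn - \min\{m, 2n\}$ recalled just before the corollary: for $m = 3$ and $n \geq 2$ we have $\min\{3, 2n\} = 3$, so the conjectured value is $3n - 3$, in agreement with the computation above, which is why this establishes the case $m = 3$ of that conjecture.
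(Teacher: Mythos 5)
Your proof is correct and is exactly the argument the paper intends: the corollary is stated without a separate proof precisely because it follows from Theorem \ref{T37} by taking $T=P_n$ and $K_m=K_3=C_3$, yielding $mr_+(C_3\Box P_n)=3n-3$. Your additional remarks on the $n=1$ degenerate case and the consistency check with the conjecture $mr_+(C_m\Box P_n)=mn-\min\{m,2n\}$ are accurate and match the paper's surrounding discussion.
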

At the end of this subsection we consider the graph $K_n \Box K_m$. Unfortunately
the minimum positive semidefinite rank of this graph is not known until now, and hence
we cannot characterize it as frame graph. Another conjecture in \cite{peter}
states that $mr_{+}^{\mathbb{R}}(K_n \Box K_m)= n+m-2$. In what follows
we show that $mr_{+}^{\mathbb{R}}(K_n \Box K_m)$ is one of the numbers $n+m-2$ or $n+m-1$,
therefore $K_n \Box K_m$ is a frame graph in the real spaces of dimension $n+m-1, m+n,..., nm$
and the dimension $n+m-2$ remains unsolved.

If $A$ is an $n \times n$ matrix and $B$ is an $m\times m$
matrix, then $A \otimes B$, the Kronecker product, is the $n \times n$
block matrix whose $(i, j)$th block is the $m \times m$
matrix $a_{ij}B$.
Let $G$ and $H$ be the graphs on $n$ and $m$ vertices, respectively, and
let $A$ and $B$ be two matrices such that $\mathcal{G}(A)=G$ and $\mathcal{G}(B)=H$. Then $\mathcal{G}(A \otimes I_m + I_n \otimes B )= G \Box H$ (see \cite{Godsil}).
The matrix $A \otimes I_m + I_n \otimes B$ known as {\it Kronecker sum} and denoted by $A \oplus B$. If $\lambda$ is an eigenvalue of $A$ and $\mu$ is an eigenvalue
of $B$, then $\lambda + \mu$ is an eigenvalue of $A \oplus B$
and any eigenvalue of $A \oplus B$ arises as such a sum
of eigenvalues of $A$ and $B$ \cite{Godsil}.
We use these facts to proof the next lemma. 

 \begin{lemma} \label{L39}
For the graphs $G$ and $H$ on $n$
and $m$ vertices, respectively,
$$mr_{+}^{\mathbb{R}}(G \Box H) \leq
m mr_{+}^{\mathbb{R}}(G)+ n mr_{+}^{\mathbb{R}}(H)- mr_{+}^{\mathbb{R}}(G) mr_{+}^{\mathbb{R}}(H).$$
\end{lemma}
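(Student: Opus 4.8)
The plan is to realize optimal positive semidefinite representations of $G$ and $H$ as Gramians of frames, then combine them via the Kronecker-sum construction recalled just above the statement so that the rank of the combined matrix is controlled by the eigenvalue-addition rule. Write $r = mr_{+}^{\mathbb{R}}(G)$ and $s = mr_{+}^{\mathbb{R}}(H)$. First I would pick $A \in S_{n}^{+}$ with $\mathcal{G}(A)=G$ and $\operatorname{rank}(A)=r$, and $B \in S_{m}^{+}$ with $\mathcal{G}(B)=H$ and $\operatorname{rank}(B)=s$. Since $A$ and $B$ are positive semidefinite, their eigenvalues are nonnegative; the number of strictly positive eigenvalues of $A$ is exactly $r$ (so $A$ has $n-r$ zero eigenvalues) and similarly $B$ has $m-s$ zero eigenvalues. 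The matrix $A \oplus B := A \otimes I_m + I_n \otimes B$ lies in $S_{nm}^{+}$ (a sum of two positive semidefinite matrices), and by the quoted fact its eigenvalues are precisely the sums $\lambda_i + \mu_j$ where $\lambda_i$ ranges over eigenvalues of $A$ and $\mu_j$ over eigenvalues of $B$, counted with multiplicity.

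The key counting step is then: $A \oplus B$ has eigenvalue $0$ exactly when $\lambda_i + \mu_j = 0$, and since both summands are nonnegative this forces $\lambda_i = 0$ and $\mu_j = 0$. Hence the multiplicity of the zero eigenvalue of $A \oplus B$ equals $(n-r)(m-s)$, so
$$\operatorname{rank}(A \oplus B) = nm - (n-r)(m-s) = mr + ns - rs.$$
Finally, $\mathcal{G}(A \oplus B) = G \Box H$ by the fact cited from \cite{Godsil}, so $A \oplus B$ is an admissible matrix for $G \Box H$, giving $mr_{+}^{\mathbb{R}}(G \Box H) \leq \operatorname{rank}(A \oplus B) = m\,mr_{+}^{\mathbb{R}}(G) + n\,mr_{+}^{\mathbb{R}}(H) - mr_{+}^{\mathbb{R}}(G)\,mr_{+}^{\mathbb{R}}(H)$, as desired.

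The one point that needs a little care, and which I would flag as the main obstacle, is the claim that the zero eigenvalue of $A \oplus B$ has multiplicity exactly $(n-r)(m-s)$: this uses that the eigenvalue multiset of $A \oplus B$ is obtained by taking all pairwise sums with multiplicity (not merely that every eigenvalue is such a sum), which is clean here because $A$ and $B$ are simultaneously handled by their own orthonormal eigenbases — the eigenvectors of $A \oplus B$ are the tensor products $x_i \otimes y_j$ of eigenvectors of $A$ and $B$, and these $nm$ vectors form an orthonormal basis of $\mathbb{R}^{nm}$. Positive semidefiniteness is what makes "$\lambda_i + \mu_j = 0 \iff \lambda_i = \mu_j = 0$" work; without it one could get cancellations and the bound would fail. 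Once that is in place the argument is essentially immediate, and it is worth noting it also shows $A\oplus B$ is positive semidefinite, which is needed for it to compete in the minimization defining $mr_{+}^{\mathbb{R}}(G\Box H)$.
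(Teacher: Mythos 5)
Your proposal is correct and follows essentially the same route as the paper's proof: take optimal positive semidefinite representatives $A$ and $B$, form the Kronecker sum $A\oplus B$, and use positive semidefiniteness to count the zero eigenvalues as $(n-r)(m-s)$, giving the stated rank. Your explicit remark that the eigenvalue multiset (with multiplicities) is the full set of pairwise sums, witnessed by the orthonormal basis $x_i\otimes y_j$, is a useful clarification of a point the paper passes over quickly, but the argument is the same.
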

\begin{proof} Let $A$ and $B$ be two matrices
in $ S_{n}^{+}$ and $S_{m}^{+}$, respectively, such that $\mathcal{G}(A)=G$, $\mathcal{G}(B)=H$, $rank(A)= mr_{+}^{\mathbb{R}}(G)$, and $rank(B)=mr_{+}^{\mathbb{R}}(H)$.
Then $A$ has $n-mr_{+}^{\mathbb{R}}(G)$ zero eigenvalues,
$B$ has $m-mr_{+}^{\mathbb{R}}(H)$ zero eigenvalues and all the other eigenvalues
of $A$ and $B$ are positive and non zero. Since the eigenvalues of
$A \oplus B$ are the sum of the eigenvalues of $A$ and $B$, $A \oplus B$
has exactly $ m mr_{+}^{\mathbb{R}}(G)+ n mr_{+}^{\mathbb{R}}(H)- mr_{+}^{\mathbb{R}}(G)
mr_{+}^{\mathbb{R}}(H)$ positive non zero eigenvalues, and so $rank(A \oplus B)=
m mr_{+}^{\mathbb{R}}(G)+ n mr_{+}^{\mathbb{R}}(H)- mr_{+}^{\mathbb{R}}(G) mr_{+}^{\mathbb{R}}(H)$.
The matrix $A \oplus B$ belongs to $S^{+}_{nm}$ and describes $G \Box H$,
i.e, $\mathcal{G}(A \oplus B)= G\Box H$, so
$mr_{+}^{\mathbb{R}}(G \Box H) \leq rank(A \oplus B)=
m mr_{+}^{\mathbb{R}}(G)+ n mr_{+}^{\mathbb{R}}(H)- mr_{+}^{\mathbb{R}}(G) mr_{+}^{\mathbb{R}}(H).$
\end{proof}

 The minimum rank of a graph $G$ (over $\mathbb{R}$) is defined to be
$$\textrm{mr}(G)= \min\{rank(A): A \in S_{n} \ and \ \mathcal{G}(A)=G \}, $$
where $S_n$ is the set of symmetric matrices over $\mathbb{R}$.
Clearly $$ mr(G)\leq mr_+^{\mathbb{R}}(G).$$
\begin{theorem} \label{T310}
$mr_{+}^{\mathbb{R}}(K_n \Box K_m)$ is either $n+m-2$
or $n+m-1$.
\end{theorem}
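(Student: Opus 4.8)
The plan is to trap $mr_{+}^{\mathbb{R}}(K_n\Box K_m)$ between the two consecutive values $n+m-2$ and $n+m-1$: the upper bound will come from the Kronecker sum, and the lower bound from exhibiting an $OS$-vertex set of the right size.

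For the upper bound I would simply invoke Lemma \ref{L39} with $G=K_n$ and $H=K_m$. Since $mr_{+}^{\mathbb{R}}(K_n)=mr_{+}^{\mathbb{R}}(K_m)=1$, it yields
$$mr_{+}^{\mathbb{R}}(K_n\Box K_m)\le m\cdot 1+n\cdot 1-1\cdot 1=n+m-1 .$$

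For the lower bound the plan is to build an $OS$-vertex set of cardinality $n+m-2$ and then apply Proposition \ref{p31}, which gives $OS(K_n\Box K_m)\le mr_{+}^{\mathbb{C}}(K_n\Box K_m)\le mr_{+}^{\mathbb{R}}(K_n\Box K_m)$. Write the vertices of $K_n\Box K_m$ as pairs $(i,j)$ with $1\le i\le n$ and $1\le j\le m$, so that $(i,j)$ is adjacent to $(i',j')$ precisely when the two pairs agree in exactly one coordinate. I would take
$$S=\{(i,1):2\le i\le n\}\cup\{(1,j):2\le j\le m\},$$
which has $(n-1)+(m-1)=n+m-2$ elements, ordered by listing the vertices $(i,1)$ first (in any order) and then the vertices $(1,j)$ in increasing order of the second coordinate. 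As the witness vertex take $w=(i,2)$ for $v=(i,1)$ and $w=(2,j)$ for $v=(1,j)$. What makes the verification essentially automatic is that at every stage $k$ the induced subgraph $G_k$ splits as the disjoint union of a clique lying in the first column and a clique lying in the first row, with no edges between the two blocks (a vertex $(i,1)$ with $i\ge 2$ and a vertex $(1,j)$ with $j\ge 2$ agree in no coordinate); hence $H_k$ is simply the clique containing $v_k$, and a one-line check shows that each $w_k$ is adjacent to $v_k$, lies outside $S$, and is non-adjacent to every other vertex of $H_k$.

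The hard part is that this is essentially the whole story: the $OS$-number bound from below and the Kronecker-sum bound from above differ by exactly one, and neither seems improvable by the present methods, so the plan leaves open whether $mr_{+}^{\mathbb{R}}(K_n\Box K_m)$ is the conjectured $n+m-2$ or instead $n+m-1$. As a sanity check in the range $|n-m|\le 1$ one can bypass the $OS$-argument: $K_n\Box K_m$ then contains an induced path on $n+m-1$ vertices, realized as a staircase that alternately takes one step along a row and one step down a column; since $mr_{+}$ cannot increase upon passing to an induced subgraph and $mr_{+}(P_{n+m-1})=n+m-2$, the lower bound follows. For $|n-m|\ge 2$, however, the longest such staircase is too short, which is precisely why the $OS$-vertex set above is needed.
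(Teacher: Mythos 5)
Your proof is correct, and while the upper bound is exactly the paper's argument (Lemma \ref{L39} applied with $mr_{+}^{\mathbb{R}}(K_n)=mr_{+}^{\mathbb{R}}(K_m)=1$), your lower bound takes a genuinely different route. The paper gets $n+m-2\le mr_{+}^{\mathbb{R}}(K_n\Box K_m)$ by citing the known value $mr(K_n\Box K_m)=n+m-2$ of the ordinary symmetric minimum rank from \cite[Corollary 3.11]{AIM} and the trivial inequality $mr(G)\le mr_{+}^{\mathbb{R}}(G)$. You instead exhibit an explicit $OS$-vertex set of size $n+m-2$ (the punctured first column together with the punctured first row, with witnesses in the second column and second row) and invoke Proposition \ref{p31}; I checked the verification and it goes through, the key point being exactly the one you isolate, namely that the partial induced subgraphs split into two cliques with no edges between them, so each $H_k$ is a single clique and each witness is non-adjacent to all of it except $v_k$. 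Your argument is self-contained where the paper's leans on an external minimum-rank computation, and it buys something extra: since $OS(G)\le mr_{+}^{\mathbb{C}}(G)$, you also get $n+m-2\le mr_{+}^{\mathbb{C}}(K_n\Box K_m)$, a bound the paper's route does not yield (the inequality $mr(G)\le mr_{+}^{\mathbb{C}}(G)$ is not available, since real symmetric matrices of low rank need not be comparable to Hermitian positive semidefinite ones), and which would sharpen the corresponding entry of Table \ref{table}. The concluding staircase remark is a fine consistency check but, as you note, only covers $|n-m|\le 1$ and is not needed.
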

\begin{proof}
It is known that $mr(K_n \Box K_m)=m+n-2$ (see \cite[Corollary 3.11]{AIM}).
Since $ mr(K_n \Box K_m)\leq mr_+^{\mathbb{R}}(K_n \Box K_m)$,
$n+m-2 \leq mr_+^{\mathbb{R}}(G)$. On the other hand, by Lemma
\ref{L39}, $mr_{+}^{\mathbb{R}}(K_n \Box K_m)$ cannot be more than $n+m-1$.
\end{proof}

 \subsection{Corona product}
The {\it corona product} of $G_1=(V_1, E_1)$ with
$G_2=(V_2, E_2)$, denoted by $G_1 \circ G_2$,
is the graph of order $|V_1||V_2|+|V_1|$ obtained by
taking one copy of $G_1$ and $|V_1|$ copies of $G_2$, and joining
all the vertices in the $i$th copy of $G_2$ to the $i$th vertex of
$G_1$ \cite{Harary}.\\
The following proposition helps us to provide a short proof
for the next theorem.
\begin{proposition} \label{p311}\cite[Observation 3.14 ]{Fallat} \label{o42}
Let $G$ be a graph and $G_1, G_2,...,G_n$ be a cover
of (labeled)subgraphs of it, i.e., $G= \bigcup _{i=1}^{n} G_i$.
Then $mr_+^{\mathbb{R}}(G) \leq \sum _{i=1}^{n} mr_+^{\mathbb{R}}(G_i)$.
\end{proposition}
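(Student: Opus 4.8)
The plan is to assemble an explicit real positive semidefinite matrix realizing the pattern $G$ out of optimal matrices for the pieces $G_1,\dots,G_n$, combined by padding and a generic positive linear combination. Write $N=|G|$ and identify $V(G)=\{1,\dots,N\}$. For each $i$, choose $A_i\in S_{|G_i|}^{+}$ with $\mathcal{G}(A_i)=G_i$ and $\mathrm{rank}(A_i)=mr_+^{\mathbb{R}}(G_i)$. Extend $A_i$ to a matrix $\widetilde{A}_i\in S_N^{+}$ by inserting zero rows and columns at the positions corresponding to the vertices of $G$ that are not vertices of $G_i$. This operation preserves both positive semidefiniteness and rank, so $\mathrm{rank}(\widetilde{A}_i)=mr_+^{\mathbb{R}}(G_i)$, and off-diagonal entries of $\widetilde{A}_i$ that lie outside the block of $G_i$ vanish.

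Next, for a tuple $t=(t_1,\dots,t_n)$ of positive reals set $A(t)=\sum_{i=1}^{n}t_i\widetilde{A}_i$. Each summand is positive semidefinite, hence so is $A(t)$, and subadditivity of rank gives $\mathrm{rank}(A(t))\le\sum_{i=1}^{n}\mathrm{rank}(\widetilde{A}_i)=\sum_{i=1}^{n}mr_+^{\mathbb{R}}(G_i)$ for every admissible $t$. It remains to pick $t$ so that $\mathcal{G}(A(t))=G$. If $\{j,k\}$ is a non-edge of $G$ with $j\ne k$, then for each $i$ either $j,k$ both lie in $V(G_i)$, in which case $(A_i)_{jk}=0$ because $\mathcal{G}(A_i)=G_i$ and $G_i$ is a subgraph of $G$, or else the $(j,k)$ entry of $\widetilde{A}_i$ lies in a zero-padded row or column; in all cases $(\widetilde{A}_i)_{jk}=0$, so $A(t)_{jk}=0$. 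Conversely, if $\{j,k\}\in E(G)$, the cover hypothesis supplies an index $i_0$ with $\{j,k\}\in E(G_{i_0})$, and then $(\widetilde{A}_{i_0})_{jk}=(A_{i_0})_{jk}\ne 0$; hence $t\mapsto A(t)_{jk}=\sum_i t_i(\widetilde{A}_i)_{jk}$ is a nonzero linear form, vanishing only on a hyperplane of $\mathbb{R}^{n}$. Since $G$ has finitely many edges, we may choose $t$ with all coordinates positive avoiding this finite union of hyperplanes; for such $t$ we get $\mathcal{G}(A(t))=G$, and therefore $mr_+^{\mathbb{R}}(G)\le\mathrm{rank}(A(t))\le\sum_{i=1}^{n}mr_+^{\mathbb{R}}(G_i)$.

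The one genuinely delicate point is the step just completed: adding the padded matrices directly could make the several contributions to a given edge entry cancel, destroying the pattern, and the positive scalars $t_i$ are inserted precisely to avoid this at no cost in rank (one could equally perturb or rescale each $A_i$, but the linear-combination argument is cleanest). A minor remark worth including: the diagonal of $A(t)$ needs no separate attention, since a positive semidefinite matrix with a zero diagonal entry has the entire corresponding row zero, which cannot occur for a vertex incident to an edge, and the standing assumption that all graphs here are connected and non-trivial excludes isolated vertices.
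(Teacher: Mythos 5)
The paper offers no proof of this proposition; it is quoted directly from Fallat and Hogben's survey (their Observation 3.14), so there is nothing internal to compare against. Your argument is correct and is essentially the standard proof of that cited fact: pad each optimal positive semidefinite matrix $A_i$ with zero rows and columns, sum, and use subadditivity of rank. You also correctly identify and repair the one point that is easy to gloss over, namely that the contributions of several $G_i$ to a common edge entry could cancel; introducing generic positive weights $t_i$ (or, equivalently, generically rescaling the vector representations before concatenating them) fixes this without increasing the rank, since each edge imposes only a nonzero linear form on $t$ and finitely many hyperplanes cannot cover the open positive orthant. The closing remark about the diagonal is harmless but unnecessary here, as $\mathcal{G}(A)$ is determined by off-diagonal entries only.
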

\begin{theorem} \label{T312} Let $G$ be a connected graph
of order $n$ with $OS(G)=mr_+(G)$ and $H$ be a connected
graph with $OS(H)=mr_+(H)$.
Then $mr_+(G \circ H)=n(mr_+(H))+ mr_+(G)$.
\end{theorem}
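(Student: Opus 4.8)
The plan is to prove the two inequalities $mr_+(G\circ H)\le n(mr_+(H))+mr_+(G)$ and $mr_+(G\circ H)\ge n(mr_+(H))+mr_+(G)$ separately, and then observe that equality of the real and complex values follows because the upper bound will be produced over $\mathbb{R}$ while the lower bound is produced via the $OS$-number (which bounds $mr_+^{\mathbb{C}}$ from below by Proposition \ref{p31}).

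For the upper bound I would exhibit a cover of $G\circ H$ by subgraphs whose $mr_+^{\mathbb{R}}$ values add up to the claimed quantity, and apply Proposition \ref{p311}. The natural cover: take the $n$ copies of $H$, each augmented by the single vertex of $G$ it is attached to, giving $n$ copies of the cone $\{v\}\vee H$; and then take one copy of $G$ itself. Every edge of $G\circ H$ lies in one of these: edges inside a copy of $H$ and the ``spokes'' joining that copy to its $G$-vertex lie in the corresponding cone, and edges of $G$ lie in the copy of $G$. By Theorem \ref{T34}(i) (the join formula, $mr_+^{\mathbb{C}}(K_1\vee H)=\max\{mr_+^{\mathbb{C}}(K_1),mr_+^{\mathbb{C}}(H)\}=mr_+(H)$ since $H$ has an edge so $mr_+(H)\ge 1$), and noting the corresponding real statement can be arranged, each cone contributes $mr_+(H)$, and the copy of $G$ contributes $mr_+^{\mathbb{R}}(G)=mr_+(G)$. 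Summing: $mr_+^{\mathbb{R}}(G\circ H)\le n\,mr_+(H)+mr_+(G)$. (One must be slightly careful that the join formula of \cite{Hackney} is stated for $mr_+^{\mathbb{C}}$; I would either cite its real analogue or note that adding a dominating vertex to $H$ raises $mr_+^{\mathbb{R}}$ by at most $\dots$—in fact a direct frame construction, embedding a frame for $H$ in dimension $mr_+(H)$ and placing the cone vertex suitably, gives the real bound directly.)

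For the lower bound I would build an explicit $OS$-vertex set of $G\circ H$ of size $n\,OS(H)+OS(G)$ and invoke $OS(G\circ H)\le mr_+^{\mathbb{C}}(G\circ H)$. Order the vertices as follows: first list, copy by copy, the $OS$-vertex set of each of the $n$ copies of $H$ (using within each copy its own optimal $OS$-ordering, with witnesses $w_k$ taken inside the same copy of $H$); then append an $OS$-vertex set of the base graph $G$, where for a base vertex $v_i$ one may take as witness $w_i$ any vertex of the $i$th copy of $H$ that has not yet been used — such a vertex exists provided each copy of $H$ has strictly more vertices than $OS(H)$, which holds since $OS(H)\le |H|-1$ always, or even when $H=K_1$ is excluded as the section assumes. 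The verification that the defining property of an $OS$-vertex set is preserved is the crux: for the $H$-part the witness conditions are inherited from each copy's own $OS$-set, because vertices outside a given copy of $H$ are non-adjacent to interior vertices of that copy except through the single base vertex, which is not yet in the ordering; for the $G$-part the witness $w_i$ sits in the $i$th copy of $H$ and is non-adjacent to all other base vertices and to all vertices of other copies, so the only issue is adjacency to already-listed vertices of the $i$th copy, which is avoided by choosing $w_i$ outside that copy's $OS$-set. Hence $n\,OS(H)+OS(G)\le OS(G\circ H)\le mr_+^{\mathbb{C}}(G\circ H)$, and using the hypotheses $OS(H)=mr_+(H)$, $OS(G)=mr_+(G)$ gives $n\,mr_+(H)+mr_+(G)\le mr_+^{\mathbb{C}}(G\circ H)\le mr_+^{\mathbb{R}}(G\circ H)$.

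Combining the two bounds yields $mr_+^{\mathbb{C}}(G\circ H)=mr_+^{\mathbb{R}}(G\circ H)=n\,mr_+(H)+mr_+(G)$, i.e. $mr_+(G\circ H)=n(mr_+(H))+mr_+(G)$. The main obstacle I anticipate is the careful bookkeeping in the $OS$-vertex set argument — specifically checking that, when a base vertex $v_i$ is added, its chosen witness $w_i$ inside copy $i$ fails to be adjacent to every previously listed vertex that lies in the connected component $H_k$ containing $v_i$ at that stage (which now includes all of copy $i$ plus $v_i$); this forces $w_i$ to be chosen among the $|H|-OS(H)\ge 1$ unused vertices of copy $i$, and one must confirm such a vertex is non-adjacent to the used $OS$-vertices of that copy, which may require choosing the within-copy $OS$-orderings and their witnesses compatibly. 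A secondary, more minor point is pinning down the real version of the join bound for the upper-bound half; the cleanest route is a direct frame/Gram-matrix construction rather than quoting \cite{Hackney} verbatim.
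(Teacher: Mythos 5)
Your overall strategy coincides with the paper's: cover $G\circ H$ by the $n$ cones $\{v_i\}\vee H$ together with one copy of $G$ and apply Proposition \ref{p311} for the upper bound, and exhibit an $OS$-vertex set of size $n\,OS(H)+OS(G)$ for the lower bound. The upper half is fine (and your remark that one really needs the \emph{real} analogue of the join formula, obtainable by a direct frame construction, is a fair point the paper glosses over). The problem is in the lower half, at exactly the spot you flag as the crux: your choice of witness for the base vertices does not work, and cannot be repaired in the way you suggest.

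Concretely, when the base vertex $v_i$ is appended, the connected component of $v_i$ in the induced subgraph already contains \emph{all} previously listed $OS$-vertices of the $i$th copy of $H$, since each of them is adjacent to $v_i$. You propose to take as witness for $v_i$ an unused vertex $w_i$ of the $i$th copy; such a $w_i$ must then be non-adjacent to every used $OS$-vertex of that copy. But if $H$ is a tree on $m'\geq 2$ vertices --- the case needed for items (1), (3) and (5) of Corollary \ref{c313} --- then $OS(H)=m'-1$, so exactly one vertex of the copy is unused, and connectedness of $H$ forces that vertex to be adjacent to some used vertex. No choice of within-copy ordering or witnesses avoids this, so your proposed repair fails (cycles exhibit the same obstruction). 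The correct witness for a base vertex $u_k$ in the appended $OS$-set of $G$ is the witness it already has \emph{inside $G$}: because a vertex of the $j$th copy of $H$ is adjacent to no base vertex other than $v_j$, connectivity among listed base vertices is the same whether computed in the induced subgraph of $G\circ H$ or of $G$; the listed vertices of the witness's own copy lie in a different component (they attach to the rest only through the not-yet-listed witness itself); and the witness is adjacent to no vertex of any other copy. Hence the $OS$-condition for $u_k$ is inherited verbatim from $G$. With that substitution your argument goes through and agrees with the paper's (terser) proof.
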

\begin{proof} Let $\{v_1, v_2,..., v_n\}$
be the vertex set of $G$. $G \circ H= (\bigcup_{i=1}^{n} H\vee v_i) \bigcup G $.
Therefore, by Proposition \ref{o42},
$$mr_+^{\mathbb{R}}(G \circ H) \leq (\sum _{i=1}^{n} mr_+^{\mathbb{R}}(H \vee v_i))+mr_+^{\mathbb{R}}(G)
=n(mr_+(H))+mr_+(G).$$
 On the other hand, the union of $n$ distinct $OS$-vertex sets of the copies of
$H$ along with the $OS$-vertex set of a copy
of $G$ in $G \circ H$ makes an $OS$-vertex set
of length $n OS(H)+ OS(G)$ for $G \circ H$, and so
$n OS(H)+ OS(G) \leq OS(G \circ H)$. Again by using Proposition \ref{p31}, we have $n (mr_+(H))+mr_+(G) \leq mr_+^{\mathbb{C}}(G \circ H)$.
\end{proof}

 It is easy to check that $OS(C_t)=mr_+(C_t)=t-2$ and $OS(K_n)=mr_+(K_n)=1$ and
it is known that if $G$ is a connected chordal graph,
then $OS(G)=mr_+^{\mathbb{C}}(G) = cc(G)$ (see \cite[Corollary 3.8]{Hackney}).
If $T$ is a tree of order $m$, then it is chordal and $OS(T)=mr_+(T)=cc(T)=m-1$. Now the following corollary is a direct consequence of Theorem \ref{T312}.

\begin{corollary} \label{c313}
Let $T$ and $T'$ be trees of order $m \geq 2$
and $m' \geq 2$, respectively. The followings hold.
\begin{enumerate}
\item $mr_+(T \circ T')=mm'-1$. 
\item $mr_+(T \circ K_n)=2m-1 (n \geq 2)$. 
\item $mr_+(K_n \circ T )=nm-n+1$.
\item $mr_+(K_n \circ K_m)=n+1$. 
\item $mr_+(C_n \circ T)=nm-2$. 
\item $mr_+(T \circ C_n)=m(n-1)-1 $. 
\item $mr_+(C_n \circ K_m )=2n-2 $.
\item $mr_+(K_m \circ C_n)=m(n-2)+1.$ 
\item $mr_+(C_n \circ C_{m})=n(m-1)-2.$ 
 \end{enumerate}
\end{corollary}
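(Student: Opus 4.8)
The plan is to derive all nine identities as immediate applications of Theorem~\ref{T312}, once we have recorded the three basic facts quoted just before the corollary: $OS(C_t)=mr_+(C_t)=t-2$, $OS(K_n)=mr_+(K_n)=1$, and $OS(T)=mr_+(T)=m-1$ for a tree $T$ on $m$ vertices (this last follows from \cite[Proposition 3.9]{Hackney} together with \cite[Corollary 3.8]{Hackney}, since trees are chordal). In each of the nine cases the outer factor $G$ and the inner factor $H$ are drawn from $\{K_n, T, C_n\}$, and every one of these satisfies the hypothesis $OS=mr_+$ of Theorem~\ref{T312}; moreover they are all connected and, after possibly renaming, have more than one vertex (for $K_n$ with $n\ge 2$ in item (2), and $C_n$ with $n\ge 3$). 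So Theorem~\ref{T312} applies verbatim and gives $mr_+(G\circ H)=|G|\cdot mr_+(H)+mr_+(G)$.

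Then I would just substitute. Writing $|G|$ for the order of the outer graph and plugging in the values $mr_+(T)=m-1$, $mr_+(K_n)=1$, $mr_+(C_n)=n-2$: for (1) $mr_+(T\circ T')=m(m'-1)+(m-1)=mm'-1$; for (2) $mr_+(T\circ K_n)=m\cdot 1+(m-1)=2m-1$; for (3) $mr_+(K_n\circ T)=n(m-1)+1=nm-n+1$; for (4) $mr_+(K_n\circ K_m)=n\cdot 1+1=n+1$; for (5) $mr_+(C_n\circ T)=n(m-1)+(n-2)=nm-2$; for (6) $mr_+(T\circ C_n)=m(n-2)+(m-1)=m(n-1)-1$; for (7) $mr_+(C_n\circ K_m)=n\cdot 1+(n-2)=2n-2$; for (8) $mr_+(K_m\circ C_n)=m(n-2)+1$; for (9) $mr_+(C_n\circ C_m)=n(m-2)+(n-2)=n(m-1)-2$. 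Each line is a one-step arithmetic simplification, matching the stated formulas, so there is essentially nothing beyond bookkeeping.

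The only point that needs a word of care — and the closest thing to an obstacle — is checking that the quantity ``$mr_+$'' is unambiguous in every case, i.e. that $mr_+^{\mathbb{R}}$ and $mr_+^{\mathbb{C}}$ coincide for all nine corona products. But Theorem~\ref{T312} already delivers this: its proof sandwiches $mr_+^{\mathbb{C}}(G\circ H)$ and $mr_+^{\mathbb{R}}(G\circ H)$ between the same lower bound $|G|\,OS(H)+OS(G)$ (via Proposition~\ref{p31}) and the same upper bound $|G|\,mr_+^{\mathbb{R}}(H)+mr_+^{\mathbb{R}}(G)$ (via Proposition~\ref{p311}), and the hypotheses $OS(G)=mr_+(G)$, $OS(H)=mr_+(H)$ force these two bounds to agree. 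Since $K_n$, $T$, and $C_n$ all have the coincidence $mr_+^{\mathbb{R}}=mr_+^{\mathbb{C}}$ and the equality $OS=mr_+$, every hypothesis is met and the common value $mr_+$ is well defined throughout. I would therefore present the corollary's proof as: invoke Theorem~\ref{T312}, cite the three base values, and list the nine substitutions; no further argument is required.
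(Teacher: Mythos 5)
Your proposal is correct and follows exactly the paper's route: the paper also presents this corollary as a direct consequence of Theorem~\ref{T312} together with the base values $OS(T)=mr_+(T)=m-1$, $OS(K_n)=mr_+(K_n)=1$, and $OS(C_t)=mr_+(C_t)=t-2$, with the nine formulas obtained by substitution. Your arithmetic in all nine cases checks out, and your remark about why $mr_+^{\mathbb{R}}$ and $mr_+^{\mathbb{C}}$ coincide is a correct reading of the sandwich argument inside Theorem~\ref{T312}.
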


As far as we know all of the results
of Corollary \ref{c313}
are new with the exception of $4$ and $7$ which
was established earlier in \cite{AIM} and \cite{peters}.

 Note that $mr_+(H_n)=OS(H_n)=2$. Therefore the minimum positive
semidefinite rank of the corona product of
$H_n$ with trees, cycles and complete graphs can be calculated easily.

 The following theorem is an immediate consequence of
Theorem \ref{T312}, Lemma \ref{L21} and Lemma \ref{L22}.

\begin{theorem} \label{T314}
 Let $G$ and $H$ be connected graphs of order
$n$ and $m$, respectively, which are either chordal or cycle graph.
If $G$ is just a frame graph in the spaces of dimension
$n', n'+1,..., n$ and $H$ is just a frame graph in the spaces of
dimension $m', m'+1,..., m$, then $G \circ H$
is just a frame graph in the spaces of dimension
$nm'+n', nm'+n'+1,...,nm+n-1, nm+n$.
\end{theorem}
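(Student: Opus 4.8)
The plan is to reduce the statement to Theorem~\ref{T312} combined with Lemmas~\ref{L21} and~\ref{L22}. First I would rephrase the hypotheses in terms of minimum positive semidefinite rank. By the remark following Lemma~\ref{L22}, saying that $G$ is \emph{just} a frame graph in the spaces of dimension $n',n'+1,\dots,n$ amounts to saying that $|G|=n$ and $mr_+^{\mathbb{R}}(G)=n'$, and similarly $|H|=m$ and $mr_+^{\mathbb{R}}(H)=m'$. Since each of $G$ and $H$ is either chordal or a cycle, the facts recalled just before Corollary~\ref{c313} apply: for a connected chordal graph $OS=mr_+^{\mathbb{C}}=cc$, and for a cycle $OS(C_t)=mr_+(C_t)=t-2$. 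In either case $mr_+^{\mathbb{R}}$ and $mr_+^{\mathbb{C}}$ coincide, so $mr_+(G)=n'$ and $mr_+(H)=m'$ are well defined, and the conditions $OS(G)=mr_+(G)$ and $OS(H)=mr_+(H)$ needed to invoke Theorem~\ref{T312} are satisfied.

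Next I would apply Theorem~\ref{T312} directly to obtain $mr_+(G\circ H)=n\,mr_+(H)+mr_+(G)=nm'+n'$, again a value shared by $mr_+^{\mathbb{R}}$ and $mr_+^{\mathbb{C}}$. By the definition of the corona product the order of $G\circ H$ is $|G||H|+|G|=nm+n$.

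It remains to pass from the rank to the full list of admissible dimensions. By Lemma~\ref{L21} the least dimension of an inner product space in which $G\circ H$ is a frame graph is exactly $mr_+(G\circ H)=nm'+n'$, and by the lifting argument of Lemma~\ref{L22} it is then a frame graph in every dimension from $nm'+n'$ up to its order $nm+n$; this uses the strict inequality $nm'+n'<nm+n$, which holds because $G$ is connected on $n\ge 2$ vertices, hence $n'=mr_+(G)\le n-1$. Putting these together yields that $G\circ H$ is just a frame graph in the spaces of dimension $nm'+n',\,nm'+n'+1,\dots,nm+n-1,\,nm+n$. I do not expect a genuine obstacle: the entire point is that the chordal-or-cycle hypothesis is precisely what is needed to feed $G$ and $H$ into Theorem~\ref{T312} (i.e. to force $OS=mr_+$ for both factors and to make the two semidefinite ranks agree), after which the conclusion is a bookkeeping consequence of Lemmas~\ref{L21} and~\ref{L22}; the only step deserving a moment's care is verifying $nm'+n'<nm+n$ so that the lifting covers the whole claimed range.
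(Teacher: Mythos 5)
Your proposal is correct and follows exactly the route the paper intends: the paper states Theorem~\ref{T314} as an immediate consequence of Theorem~\ref{T312}, Lemma~\ref{L21} and Lemma~\ref{L22}, and your argument simply spells out that chain (translating the hypotheses into $mr_+$ values, verifying $OS=mr_+$ via the chordal-or-cycle assumption, and lifting through all dimensions up to the order $nm+n$). The only difference is that you supply the bookkeeping details the paper omits.
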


 %% ----------------------------------------------------------------------


\begin{thebibliography}{99}

\bibitem{AN} {\sc F. Abdollahi} and {\sc H. Najafi}, \emph{Frame Graph},  Linear and Multilinear Algebra, {\bf 66} (2018), 1229--1243.

\bibitem{AIM} {\sc AIM Minimum Rank-Special Graphs Work Group}, \emph{Zero forcing sets and the minimum rank of graphs}, Linear Algebras and its Applications, {\bf 428} (2008), 1628--1648.

\bibitem{Barioli} {\sc F. Barioli}, {\sc W. Barrett}, {\sc S. M. Fallat}, {\sc H. T. Hall}, {\sc L. Hogben}, {\sc B. Shader}, {\sc P. van den Driessche} and {\sc H.
van der Holst}, \emph{Zero forcing parameters and minimum rank problems}, Linear Algebra and its Applications, {\bf 433} (2010), 401--411.

\bibitem{Bodman2005} {\sc B. G. Bodman} and {\sc V. I. Paulsen},
\emph{Frames, graphs and erasures}, Linear Algebra and its Applications, {\bf 404} (2005), 118--146.

\bibitem{Bolobas} {\sc B. Bollabas}, \emph{Graph Theory, An Introductory Course}, Springer-Verlag, New York, 1979.

\bibitem{Matthew} {\sc M. Booth }, {\sc P. Hackney}, {\sc B. Harris}, {\sc C. R. Johnson}, {\sc M. Lay}, {\sc T. D. Lenker}, {\sc L. H. Mitchell}, {\sc S. K. Narayan}, {\sc Amanda Pascoe} and {\sc B. D. Sutton}, \emph{On the minimum semidefinite rank of simple graphs}, Linear and Multilinear Algebra, {\bf 59} (2011), 483--506.

\bibitem{ChSh} {\sc T. Y. Chien} and {\sc S. Waldron}, \emph{A characterisation of projective unitary equivalence of finite frames and applications}, SIAM J. Discrete Mathematics, {\bf 30}, (2016), 976--994. 

\bibitem{Chi} {\sc T. Y. Chien}, \emph{Equiangular lines, projective symmetries and nice error frames}, PhD Thesis, University of Auckland, 2015.

\bibitem{Chris} {\sc O. Christensen}, \emph{An introduction to frame and Riesz bases}, Birkhauser, Boston, 2003.

\bibitem{Diestel} {\sc R. Diestel}, \emph{Graph Theory}, Springer-Verlag, New York, 1997.

\bibitem{Fallat} {\sc S.M. Fallat} and {\sc L. Hogben}, \emph{The minimum rank of symmetric matrices described by a graph: A survey}, Linear Algebra and Its Applications, {\bf 426}, (2007), 558--582.

\bibitem{FG} {\sc V. Furst} and {H. Grotts}, \emph{Tight Frame Graphs Arising as Line Graphs}, The pump journal of undergraduate research, {\bf4} (2021), 1--19.

\bibitem{Harary}{\sc R. Frucht} and {\sc F. Harary},
\emph{On the coronas of two graphs }, Aequationes Math., {\bf 4} (1970), 322--325.

 \bibitem{Godsil} {\sc C. Godsil} and {\sc G. Royle}, \emph{Algebraic Graph Theory}, Springer-Verlag, New York, 2001.

 \bibitem{Hackney}{\sc P. Hackney}, {\sc B. Harris},
{\sc M. Lay}, {\sc S. K. Narayan }, {\sc L. H. Mitchell} and {\sc A. Pascoe }, \emph{Linearly independent vertices and minimum semidefinite rank}, Linear Algebra and its Applications, {\bf 431} (2009), 1105--1115.

\bibitem{Horn}{\sc R. A. Horn} and {\sc C. R. Johnson} \emph{Matrix Analysis}, Cambridge University Press, Cambridge, 1990, corrected reprint of the 1985 original.

\bibitem{Holmes} {\sc R. B. Holmes} and {\sc V. I Paulsen},
\emph{Optimal frames for erasures}, Linear Algebra and its Applications, {\bf 377} (2004), 31--51.

\bibitem{Holst}{\sc H. van der Holst}, \emph{Graphs whose positive semi-definite matrices have nullity at most two}, Linear Algebra and its Applications, {\bf 375} (2003), 1--11.

\bibitem{IM} {\sc J. W. Iverson} and {D. G. Mixon}, \emph{Doubly transitive lines II: Almost simple symmetries}, (2019), arXiv:1905.06859v1. 

 \bibitem{Li}{\sc X. Li}, {\sc M. Nathanson} and {\sc R. Phillips}, \emph{Minimum vector Rank and Complement Critical Graphs}, Electronic Journal of Linear Algebra, {\bf 27} (2014), 100--123.

\bibitem {Lovasz1989} {\sc L. Lov$\acute{a}$sz}, {\sc M. Saks} and {\sc A. Schrijver},
\emph{Orthogonal representations and connectivity of graphs}, Linear Algebra and its Applications, {\bf 114/115} (1989), 439--454.

\bibitem {Lovasz2000} {\sc L. Lov$\acute{a}$sz}, {\sc M. Saks} and {\sc A. Schrijver},
\emph{A correction: ``Orthogonal representations and connectivity of graphs"}, Linear Algebra and its Applications, {\bf 313} (2000), 101--105.

 \bibitem{peter} {\sc T. Peters}, \emph{Positive semidefinite maximum nullity and zero forcing number}, Electronic Journal of Linear Algebra, {\bf 23} (2012), 815--830.

 \bibitem{peters} {\sc T. Peters}, \emph{Positive semidefinite maximum nullity and zero forcing number}, PhD Thesis, Iowa State University, 2012.

\bibitem{Spacapan} {\sc S. Spacapan}, \emph{Connectivity of Cartesian products of graphs}, Applied Mathematics Letters, {\bf 21} (2008), 682--685.

\bibitem{strohmer} {\sc T. Strohmer}, {\sc R.W. Heath}, \emph{Grassmannian frames with applications to coding and communication}, Appl. Comput. Harmonic Anal., {\bf 14}  (2003), 257--275.

\bibitem{Sustik} {\sc M.A. Sustik}, {\sc J.A. Tropp}, {\sc I.S. Dhillon}
and {\sc R.W. Heath Jr}, \emph{On the existence of equiangular tight frames},
Linear Algebra and its Applications, {\bf 426} (2007), 619--635.

\bibitem{Wal} {\sc S. F. D. Waldron}, \emph{An introduction to finite tight frames}, Springer, 2018.

\bibitem{Waldron} {\sc S. F. D. Waldron}, \emph{On the construction of
equiangular frames from graphs}, Linear Algebra and its applications, {\bf 431} (2009), 2228--2242.

\bibitem{West} {\sc D. west}, \emph{Introduction to graph theory,
Second edition}, University of Illinois, Urbana, 2001.

\end{thebibliography}
\end{document}